\newcommand{\V}{\mathcal{V}} 
\newcommand{\E}{\mathcal{E}} 
\newcommand{\Nb}{\mathcal{N}} 
\newcommand{\W}{\mathbf{W}} 
\newcommand{\D}{\mathbf{D}} 
\newcommand{\A}{\mathbf{A}} 
\newcommand{\X}{\mathbf{X}} 
\DeclareMathOperator{\PE}    {PE}
\DeclareMathOperator{\MIX}    {MIX}
\DeclareMathOperator{\PEG}    {PE_G}
\newcommand{\DG}{ \overrightarrow{G}} 
\newcommand{\R}{\mathbb{R}} 
\newcommand{\N}{\mathbb{N}} 
\newcommand{\quadtext}[1]{\quad\text{#1}\quad}
\newcommand{\G}{{G}}
\newcommand{\card}[1]{\lvert#1\rvert}
\newcommand{\Sec}[1]{Section~\ref{sec:#1}}
\newcommand{\Eq}[1]{Eq.~\eqref{#1}}
\newcommand{\Fig}[1]{Figure~\ref{fig:#1}}
\newcommand{\Ex}[1]{Example~\ref{ex:#1}}
\newcommand{\Cor}[1]{Corollary~\ref{cor:#1}}
\newcommand{\Prp}[1]{Proposition~\ref{prp:#1}}
\newcommand{\set}[2]{\{ \, #1 \, | \, #2 \, \} }      
\newcommand{\map}[3]{ #1 \colon #2 \longrightarrow #3}
\newtheorem{example}{Example}
\newtheorem{corollary}{Corollary}
\newtheorem{proposition}{Proposition}
\newtheorem*{remark*}{Remark}
\begin{document}

\title{Permutation Entropy for Graph Signals}%

\author{John~Stewart~Fabila-Carrasco,
        Chao~Tan,~\IEEEmembership{Senior Member,~IEEE,}
        and~Javier~Escudero,~\IEEEmembership{Senior Member,~IEEE}
\thanks{J.S. Fabila-Carrasco and J. Escudero are with School of Engineering, Institute for Digital Communications, University of Edinburgh, West Mains Rd, Edinburgh, EH9 3FB, UK (e-mail: John.Fabila@ed.ac.uk and javier.escudero@ed.ac.uk).}
\thanks{C. Tan is with School of Electrical and Information Engineering, Tianjin University, Tianjin 300072, China (e-mail: tanchao@tju.edu.cn)}%
\thanks{J.S.~Fabila-Carrasco and J.~Escudero were supported by the Leverhulme Trust via a Research Project Grant (RPG-2020-158).}
}

\maketitle

\begin{abstract}
Entropy metrics (for example, permutation entropy) are nonlinear measures of irregularity in time series (one-dimensional data). Some of these entropy metrics can be generalised to data on periodic structures such as a grid or lattice pattern (two-dimensional data) using its symmetry, thus enabling their application to images. However, these metrics have not been developed for signals sampled on irregular domains, defined by a graph.  Here, we define for the first time an entropy metric to analyse signals measured over irregular graphs by generalising permutation entropy, a well-established nonlinear metric based on the comparison of neighbouring values within patterns in a time series. Our algorithm is based on comparing signal values on neighbouring nodes, using the adjacency matrix. We show that this generalisation preserves the properties of classical permutation for time series and the recent permutation entropy for images, and it can be applied to any graph structure with synthetic and real signals. We expect the present work to enable the extension of other nonlinear dynamic approaches to graph signals.
\end{abstract}

\begin{IEEEkeywords}
Graph signal processing, Graph Laplacian, Permutation entropy, Adjacency matrix, Irregularity, Nonlinearity Dynamics, Topology, Entropy metric.
\end{IEEEkeywords}

\IEEEpeerreviewmaketitle

\section{Introduction}

\IEEEPARstart{I}{n} the analysis of time series, entropy is a common tool used to describe the probability distribution of the states of a system. 
Based on this concept, the seminal paper~\cite{Bandt2002} introduced the so-called permutation entropy ($\PE$) as a measure to quantify irregularity (or complexity) in time series, a fundamental challenge in data analysis. This entropy involves calculating permutation patterns, i.e., permutations defined by comparing neighbouring values of the time series. In the last years, $\PE$ has been applied in different field as biomedicine~\cite{Cao2004, Olofsen2008}, physical systems~\cite{Yan2012} and economics~\cite{Zunino2009}. Some variants, modifications and extensions of $\PE$ have been introduced, including: a multiscale step~\cite{Azami2016}; changes targeting signals with noise~\cite{Chen2019}; a variation for detecting heartbeat dynamic~\cite{Bian2012}; the inclusion of a nonlinear mapping to consider the differences between the amplitude values~\cite{Azami2018,Rostaghi2016}; considering time reversibility conditions \cite{Martinez2018,Zanin2018}; and extensions to higher dimensions~\cite{Morel2021}.   

A time series can be considered as a one-dimensional data vector (1D), while an image can be regarded as a two-dimensional regular data set (2D). In the field of image processing, several entropy algorithms have been proposed to quantify the irregularity of images as generalisations of their one-dimensional analogous. Examples include: 2D permutation entropy ~\cite{Morel2021}, 2D sample entropy \cite{Silva2016}, 2D dispersion entropy~\cite{azami2019two}, and 2D distribution entropy~\cite{Azami2017a}. Most of the methods are straightforwardly generalised to higher-dimensional periodic structures. The generalisation comes from the fact that the underlying structure (the lattice graph or grid graph, for example for an image) is a periodic structure. Then, the algorithms \cite{Morel2021,Silva2016,azami2019two,Azami2017a} use the symmetry from the structure to compare the values of the signal. However, thus far, it is unclear how to generalise the two-dimensional methods to a general irregular domain (or graph).

The study of data defined on irregular graphs domains is the main interest of graph signal processing (GSP), an active research area in recent years~\cite{Ortega2018,Stankovic2019}. This is motivated by the fact that, new technological advances have enabled the recording of data from complex systems \cite{Shuman2013}. GSP is immediately useful in applications where measures are distributed on irregular domains. Examples include a network of weather stations, vehicular networks or power grids, among others~\cite{Ortega2018,Stankovic2019,Shuman2013}.
 In some cases, the signal domain is not a set of equidistant time points (time series) or a regular grid (image), and in some cases, the data is not related with the space or time. Graphs can model such data and complex interactions, and these new relations may be included in the data processing techniques. Then, some conventional signal-processing operations can be extended to graphs, such as filtering in the spectral and vertex domain, interpolation, subsampling the data with regarding to the graph~\cite{Stankovic2019,Shuman2013,Shuman2016} and generating surrogate graph signals~\cite{Pirondi2016}.

For a time series, the classical $\PE$ is computed based on the successive values of the time series or neighbouring values. These concepts are equivalents on 1D. However, for a signal on a graph, the concept of successive values is unclear, but we have the notion of neighbouring vertices. This concept is fundamental to generalise the permutation entropy for graphs signals ($\PEG$).
In particular, we will consider time series as a signal function on a 1D-graph (an undirected path) and an image as a signal function on a 2D-graph (a grid).

Of note, the concept of graph entropy has been defined in previous literature \cite{Han2012,Passerini2011}.  However, this definition involves the computation of the Laplacian eigenvalues, its probability distribution and the Shannon entropy. Therefore, it measures the complexity/irregularity of the geometric structure and topology of the graph, but not of the signals on the graph itself. 

Thus, here we introduce a measure of the regularity of a signal over a graph, combining the signal values with the topology of the graph, thus extending entropy algorithms for time series and images to graphs.

\subsection*{Contributions} The main contributions of this article are:
\begin{itemize}
	\item For the first time, the concept of a nonlinear entropy metric -permutation entropy- is extended, from unidimensional time series and two-dimensional images to data residing on the vertices of (irregular) undirected graphs.
	\item We explore how the permutation entropy of graph signals depends on both the signal and the graph. We also give conditions to change the graph while maintaining the entropy of a signal.
	\item We show that our algorithm can also be applied to signals on directed graphs and/or weighted graphs.
	\item We illustrate the application of the permutation entropy on graphs algorithm on well-established benchmark synthetic datasets and on real-world data, showing that it generalises well the behaviour of the unidimensional $\PE$ and the recently introduced two-dimensional permutation entropy.
\end{itemize}

\subsection*{Structure of the article}
The outline of the paper is as follows: \Sec{background} introduces the classical permutation entropy and the notation on graph theory used in the article (including the basic definition of the normalised Laplacian). \Sec{method} presents the main contribution: the permutation entropy for graphs signals, including a version for weighted and directed graphs. In addition, the section presents some examples and study how geometric modification on the graph preserves the entropy values of the signal. \Sec{exp} shows how $\PEG$ applies to real and synthetic signals residing on 1D, 2D and irregular graphs. The conclusions and future lines of research are presented in \Sec{conc} and it concludes the paper.

\section{Background and notation}
\label{sec:background}
In this section, we introduce general background information, including the original permutation entropy (Section~\ref{sub:originalPE}), the definition of a graph and the notion of the normalised Laplacian (Section~\ref{graphtheory}). These definitions will be fundamental to generalise the permutation entropy from a time series to a general graph signal case.  

\subsection{Original permutation entropy}\label{sub:originalPE}
Permutation entropy ($\PE$) measures the irregularity of a time series. The algorithm is based on the comparison of neighbouring values within patterns in the time series~\cite{Bandt2002}. It is a simple, robust method and computationally very fast (as it depends linearly on the number of samples of the signal: $O(N)$). For a time series $\textbf{X}=\left\{x_i\right\}_{i=1}^{N}$, the algorithm to compute $\PE$ is the following \cite{Cao2004}:

\begin{enumerate}
	\item For $2\leq m\in\N$ the \emph{embedding dimension} and $L\in\N$ the \emph{delay time}, the \emph{embedding vector} $\textbf{x}_i^m(L)\in\R^m$ is given by
	\begin{equation}\label{eq:embPE}
		\textbf{x}_i^m(L)=\left( x_{i+jL}\right)_{j=0}^{m-1}=\left(x_i,x_{i+L},\dots, x_{i+(m-1)L}\right) 
	\end{equation}
	for all $1\leq i \leq N-(m-1)L$. For practical purposes, the authors~\cite{Bandt2002} suggest to work with $3\leq m \leq 7$. We consider $L=1$ (unless explicitly stated otherwise).
	\item The $m$ real numbers of the embedding vector $\textbf{x}_i^m(L)$ are associated with natural numbers from $1$ to $m$, and then arranged in increasing order. Then, each embedding  vector $\textbf{x}_i^m(L)$ is assigned to one of the $m!$ permutation (also called possible patterns) denoted by $\pi$.
	
	Formally, the embedding vector \[\textbf{x}_i^m(L)=\left( x_i,x_{i+L},\dots, x_{i+(m-1)L}\right) \] is arranged in the increasing order vector: \[\left( x_{i+(k_1-1)L}\leq x_{i+(k_2-1)L} \leq \dots \leq  x_{i+(k_m-1)L}\right). \] Following the convention in~\cite{Cao2004}, if some values are equal, the order is given by the corresponding $k's$. For example, if $ x_{i+(k_{l1}-1)L}= x_{i+(k_{l2}-1)L}$ and $k_{l1}<k_{l1}$, we write $ x_{i+(k_{l1}-1)L}\leq x_{i+(k_{l2}-1)L}$. This convention does not affect the results~\cite{Cuesta2018}.  In particular, the constant vector $\left( 1,1,\dots, 1\right) $ is mapped onto $\left(1,2,\dots,m \right)$.  Therefore, any embedding vector $\textbf{x}_i^m(L)$ is uniquely mapped onto the vector $(k_1,k_2,\dots,k_m)\in \N^m$. 

	\item  The relative frequency for the distinct permutation $\pi_1,\pi_2,\dots,\pi_k$ where $k\leq m!$ is denoted by $p(\pi_1),p(\pi_2),\dots,p(\pi_k)$. The permutation entropy $\PE$ for the time series $\textbf{X}$ is computed as the Shannon entropy for the $k$ distinct permutations as follows
	\[
	\PE (m, L)=-\sum_{i=1}^{k} p(\pi_i) \ln p(\pi_i)\; .
	\]
\end{enumerate}

It is clear that $0\leq \PE (m, L)\leq \ln(m !)$, then, for convenience, it is normalised by  $\ln(m !)$, then
\[0\leq \frac{\PE (m, L)}{\ln(m !)}\leq 1\; .\]

The simple case is for $m=2$ and $L=1$. Given a time series $\textbf{X}$, the idea of $\PE(m,L)$ is organise the $N-1$ pair of neighbours according to their relative values.
Let $p_1$ be the number of pair of neighbours such that $x_t<x_{t+1}$, represented by the permutation $(1,2)$; and $p_2$ be the number of pair of neighbours such that $x_t>x_{t+1}$, represented by the permutation $(2,1)$. Then, using Shannon's entropy:
\begin{equation*}
	\PE(m,L)=-\dfrac{p_1}{N-1}\log \dfrac{p_1}{N-1}-\dfrac{p_2}{N-1}\log \dfrac{p_2}{N-1}\; .
\end{equation*}

In permutation entropy, the ordering of the values is taken into account, but no the magnitude of changes.

An extension of Permutation Entropy to two-dimensional patterns (images) has very recently been published~\cite{Morel2021}. This two-dimensional algorithm takes rectangular windows across the image and, for each window, vectorises its contents. Then, the steps 2 and 3 are applied.

\subsection{Graphs, graph signals and the normalised Laplacian}\label{graphtheory}
An \emph{undirected graph} $G$ is defined as the triple $G = (\V,\E,\A)$
which consists of a finite set of vertices or nodes $\V=\{1,2,3,\dots, N\}$, an edge set $\E \subset \{(i,j): i,j\in\V\}$  and $\A$
is the corresponding $N \times N$ symmetric adjacency matrix on edges with entries $1 = \A_{i j}= \A_{j i}$ if $(i,j)\in \E$ and $0$ otherwise.

Along this article, we consider graphs containing no multiple edges, loops or isolated vertices, i.e. \emph{simple} graphs.

A \emph{graph signal} is a real function defined on the vertices, i.e.,
$\map{\X}{\V}{\R}$. The graph signal $\X$ can be represented as a $N$-dimensional column vector,  $\X=\left[x_1;x_2;\dots; x_N \right] \in \R^N$ (with the same indexing of the vertices).

It is well-know that the power of the adjacency matrix counts the number of $k$-walks between two vertices, i.e., the entry $(\A^k)_{i,j}$ is equal to the number of walks of length equal to $k$  having the vertex $i$ as start and vertex $j$ as end.

Given a graph  $G = (\V,\E,\A)$, we define a function on the vertices $\map{\deg^k}{\V}{\R}$ given by 
\begin{equation}\label{eq:weightver}
	\deg^k(i):=\sum_{j\in \V}(\A^k) _{ij}=\sum_{j\in \V} (\A^k)_{ji}\; ;
\end{equation}
for $k=1$, we write $\deg^1(i)=\deg(i)$, i.e., the degree of a vertex $i$ is the number of edges that are incident to it. 

Given a vertex $i$, we define $\Nb_k(i)$ as the set of all vertices connected to the vertex $i$ with a walk on $k$ edges, i.e.,
\begin{equation}\label{eq:nb}
	\Nb_k(i):=\set{j\in \V}{\scalebox{.9}[1.0]{ it exists a walk on $k$ edges joining $i$ and $j$} }\;,
\end{equation}
with the convention $\Nb_0(i)=\{i\}$ and $\Nb_1(i)=\Nb(i)$.

The normalised Laplacian is defined using the adjacency matrix as follows:
\[ \Delta:=I-\D^{-{\frac {1}{2}}}\A \D^{-{\frac {1}{2}}}\; , \]
where $\D$ is the \emph{degree matrix}, i.e., a diagonal matrix given by $\D_{i i}=\deg(i)$.

\section{Permutation Entropy for Graph Signals}
\label{sec:method}
This section introduces the permutation entropy for graph signals (denoted as $\PEG$). In original $\PE$ for time series, the construction of the embedding vectors given by \Eq{eq:embPE} is made between values on consecutive steps ($t$ and $t+1$) (with the assumption $L=1$). Consecutive values cannot be defined straightforwardly in irregular graphs. 

As a motivation for the general definition, we show (with an example) how compare between values on a fixed vertex and its neighbourhoods (Section~\ref{sub:motivation}). For the general formulation, we will consider the topology of the graph encoded in the adjacency matrix to define the algorithm and construct the embedding vectors (Section~\ref{subsec:permutation-entropy}). Finally, we extend the algorithm for directed (Section~\ref{subsec:directedgraph}) and weighted graphs (Section~\ref{subsec:weightedgraph}).

\subsection{Motivation and example}\label{sub:motivation}
Consider the graph $G = (\V,\E,\A)$ and $\textbf{X}$ be any signal on the graph. Similarly to $\PE$ for time series of order $m=2$ and $L=1$, we compare the signal value at the vertex $i$ with respect the average of its neighbours, i.e., we will compare:
\[x_i \quadtext{and} \frac 1 {\deg(i)}  \sum_{j \in \Nb(i)} x_j\; .\]

Observe the relation with the normalised Laplacian, i.e.,
\[ \frac 1 {\deg(i)}\sum_{j \in \Nb(i)} x_j=x_i-\Delta x_i=(I-\Delta)x_i=\D^{-{\frac {1}{2}}}\A \D^{-{\frac {1}{2}}}x_i \;.\]

For each $i\in\V$, we define the pair where its first component is the value of the signal $\X$ on the node $i$ and the second component is the average of the signal $\X$ on the neighbours of $i$, i.e., 
\begin{equation}\label{eq:embPEG}
	\textbf{y}_i:=\left( x_i ,\left( I-\Delta\right) x_i\right) =\left( x_i, \D^{-{\frac {1}{2}}}\A \D^{-{\frac {1}{2}}}x_i\right)\;.
\end{equation}
The pair is analogous for the embedding vector defined by \Eq{eq:embPE} in $\PE$. 

We organize the $N$ pairs according to their relative values. Let $p_1$ be the number of pairs for which $x_i<\D^{-{\frac {1}{2}}}\A \D^{-{\frac {1}{2}}}x_i$, or equivalently
$\Delta x_i<0$ (represented by the
permutation $12$) and let $p_2$ be the number of the pairs for which $x_i > \D^{-{\frac {1}{2}}}\A \D^{-{\frac {1}{2}}}x_i$ or equivalently
$\Delta x_i>0$ (represented by the permutation $21$). 

We define the permutation entropy
of the graph signal $\textbf{X}$ for embedding $m=2$ and $L=1$ as a measure of the probability of the permutation $(1,2)$ and $(2,1)$, so:
\[\PEG=-\dfrac{p_1}{N}\log \dfrac{p_1}{N}-\dfrac{p_2}{N}\log \dfrac{p_2}{N}\; .\]

Intuitively, we are dividing the vertices of $G$ according to the signal $\X$ into two subsets. One set corresponds to the vertices such that $\Delta x_i>0$, i.e., it contains the \emph{local maximums} of the signal on the graph domain. Similarly, the other set contains vertices such $\Delta x_i<0$, i.e., the \emph{local minimums}. The interpretation is analogous to the permutation entropy for time series (for the case  $m=2$ and $L=1$), where the patterns are defined by the points where the function is increasing or decreasing.

\begin{example}\label{ex:permutationgraph}
	Consider the graph $G = (\V,\E,\A)$ and signal $\X$ shown in~\Fig{examplepeg}. 
	
\begin{figure}[h]
	\centering
	\subcaptionbox{\label{sub:exp1}}[.5\linewidth]
	{
		\begin{tikzpicture}[baseline,
			vertex/.style={circle,draw,fill, inner sep=0pt,minimum
				size=1mm},scale=.3]
			\node (0) at (-1,1) [vertex,label=left:$2$] {};
			\node (1) at (-1,5) [vertex,label=left:$1$] {};
			\node (2) at (2,3) [vertex,label=below:$3$] {};
			\node (4) at (4,5) [vertex,label=above:$4$] {};
			\node (3) at (4,1) [vertex,label=below:$5$] {};
			\node (5) at (6,3) [vertex,label=below:$6$] {};
			\node (6) at (9,5) [vertex,label=above:$7$] {};
			\node (7) at (9,1) [vertex,label=below:$8$] {};				
			\draw[](0) edge node[below] {} (2);
			\draw[](1) edge node[below] {} (2);
			\draw[](2) edge node[left] {} (3);
			\draw[](2) edge node[below] {} (4);
			\draw[](4) edge node[right] {} (5);
			\draw[](0) edge node[right] {} (1);
			\draw[](3) edge node[below] {} (5);
			\draw[](5) edge node[below] {} (6);
			\draw[](6) edge node[below] {} (7);
			
	\end{tikzpicture} } 
	\\
	
	\subcaptionbox{\label{sub:exp2}}[.75\linewidth]{\[\A = 
		\begin{pmatrix}
			0 & 1 & 1 & 0 & 0  & 0 & 0& 0\\
			1 & 0 & 1 & 0 & 0 & 0 & 0& 0\\
			1 & 1 & 0 & 1 & 1 & 0 & 0& 0\\
			0 & 0 & 1 & 0 & 0 & 1 & 0& 0\\
			0 & 0 & 1 & 0 & 0 & 1 & 0& 0\\
			0 & 0 & 0 & 1 & 1 & 0 & 1 & 0\\
			0 & 0 & 0 & 0 & 0 & 1 & 0& 1\\
			0 & 0 & 0 & 0 & 0 & 0 & 1 & 0\\
		\end{pmatrix}\]
	}
	\subcaptionbox{\label{sub:exp3}}[.2\linewidth]
	{\[\X =
		\begin{pmatrix}
			-1 \\
			-2.3 \\
			0 \\
			-3 \\
			1 \\
			5 \\
			1 \\
			-1.1
		\end{pmatrix}\]
	}
	\caption{An example of a graph $G$ (\ref{sub:exp1}), its adjacency matrix $\A$ (\ref{sub:exp2}) and a graph signal $\X$ (\ref{sub:exp3}). }
	\label{fig:examplepeg}
\end{figure}	
	We construct the embedding vectors given by \Eq{eq:embPEG}. We obtain one pair for each vertex, i.e., $\textbf{y}_1=\left(-1,-1.15 \right) $, $\textbf{y}_2=\left(-2.3,-0.5 \right) $, $\textbf{y}_3=\left(0.-1.325 \right) $, $\textbf{y}_4=\left(-3,2.5 \right) $, $\textbf{y}_5=\left(1,2.5 \right) $, $\textbf{y}_6=\left(5,-0.333 \right), \textbf{y}_7=\left( 1,1.95\right)$ and
	$\textbf{y}_8=\left( -1.1,1\right)$. 
	
	We have two patterns for the case $m=2$. The pairs $\textbf{y}_2, \textbf{y}_4, \textbf{y}_5, \textbf{y}_7$ and  $\textbf{y}_8$  belong 
	to the same pattern (where $x_i < \D^{-{\frac {1}{2}}}\A \D^{-{\frac {1}{2}}}x_i$) and $\textbf{y}_1, \textbf{y}_3$ and $\textbf{y}_6$ belong 
	to the second pattern (where $x_i > \D^{-{\frac {1}{2}}}\A \D^{-{\frac {1}{2}}}x_i$). 
	
	The relative frequency of each permutation pattern is $\frac{5}{8}$ and $\frac{3}{8}$ respectively. Finally using Shannon's entropy,
	the $\PEG$ value of the signal $\X$ is equal to $-\frac{5}{8}\ln\left( -\frac{5}{8}\right) -\frac{3}{8}\ln\left( -\frac{3}{8}\right)=0.6616$. The normalised $\PEG$ is $\frac{0.6616}{\ln(2)}=0.9544$. 
\end{example}

\subsection{Permutation entropy for graphs signals}\label{subsec:permutation-entropy}

Let $G = (\V,\E,\A)$ be a graph and $\textbf{X}=\left\{x_i\right\}_{i=1}^{N}$ be a signal on the graph, the permutation entropy for the graph signals $\PEG$ is defined as follow: 

\begin{enumerate}
	\item For $2\leq m\in\N$ the \emph{embedding dimension} and $L\in\N$ the \emph{delay time}, we construct the embedding vector $\textbf{y}_i^{m,L}\in\R^m$ given by
	\begin{equation*}
		\textbf{y}_i^{m,L}=\left( y_{i}^{kL}\right)_{k=0}^{m-1}=\left(y_i^0,y_{i}^{L},\dots y_{i}^{(m-1)L}\right)\; ,
	\end{equation*}
	for all $i=1,2,\dots,N$ and where

\begin{align}
	y_{i}^{kL}&= \frac 1 {\card{\Nb_{kL}(i)}}  \sum_{j \in \Nb_{kL}(i)} x_j\label{eq:average}\\
	          & =\frac{1}{\card{\Nb_{kL}(i)}}(\A^{kL}\X)_i\;.\label{eq:averagem}
\end{align}
	Recalling $\Nb_{kL}$ is defined by \Eq{eq:nb}, it follows that $y_i^0=x_i$ and $y_i^1=(I-\Delta)x_i$.
	\item The $m$ real numbers of the embedding vector $\textbf{y}_i^{m,L}$ are associated with integer numbers from $1$ to $m$ and then arranged in an increasing order.
	
	There are $m!$ permutation (also called possible patterns) $\pi$ for an $m$-embedding vector.
	\item  The relative frequency for the distinct permutation $\pi_1,\pi_2,\dots,\pi_k$ where $k\leq m!$ is denoted by $p(\pi_1),p(\pi_2),\dots,p(\pi_k)$. The permutation entropy $\PEG$ for the time series $\textbf{X}$ is computed as the Shannon entropy for the $k$ distinct permutations
	$$
	\PEG (m, L)=-\sum_{i=1}^{k} p(\pi_i) \ln p(\pi_i)\;.
	$$
	
\end{enumerate}

In the next sections, without specification $L=1$ is chosen. If all possible dispersion pattern have equal probability value, the $\PEG$ reaches its highest value which is equal to $\ln(m!)$. Note that we use the normalised $\PEG$ as $\dfrac{\PEG}{\ln(m!)}$.

We use \Eq{eq:average} to prove some properties of $\PEG$, while \Eq{eq:averagem} is more useful for a numerical implementation. Along this article, we will also assume that $G$ has no isolated vertices to avoid $\card{\Nb_{kL}(i)}=0$ in both equations.

	Some of the similarities and differences between the original $\PE$ and the permutation entropy for graph signals $\PEG$ are the following:
	\begin{enumerate}
		\item  The main difference is the construction of the embedding vectors in the step $1$. With \Eq{eq:embPE}, $\PE$ constructs $N-(m-1)L$ embedding vectors using $m$ consecutive values of the signal.  $\PEG$
		uses the adjacency matrix and \Eq{eq:embPEG} to obtain $N$ embedding vectors (independent of $m$ and $L$), each embedding vector corresponds to one vertex. 	
		\item The step $2$ (arrange the embedding vectors in increasing order) and $3$ (computing the Shannon's entropy) for both algorithms $\PE$ and $\PEG$ are exactly the same. Therefore, the computational cost is almost the same for both algorithms.
		\item 
		A time series $\X$ can be considered as a graph signal over the graph $G$ given by the undirected path. The value given by $\PE$ for the time series $\X$ and the value given by the $\PEG$ when $\X$ is considered as a graph signal are (in general) different. However, the dynamics detected by the $\PE$ is preserved with $\PEG$, see \Sec{exp}.
		
		The \emph{arrow of time} information is lost when we consider the undirected path. One way to preserve the information is consider $\X$ as a graph signal on the directed path and define $\PEG$ for directed graphs (see Section~\ref{subsec:directedgraph}).
		\item The adjacency matrix $\A$ is a particular case of the weight matrix $\W$, where all non-zero weight values are equal to one. The general algorithm for weighted graphs is presented in Section~\ref{subsec:weightedgraph}.
	\end{enumerate}

The entropy of the graph signal $\textbf{X}$ depends on its numerical values and the graph topology. It is interesting to study these quantities (that depend on the geometric structure of the graph) under some geometric perturbation (delete edges, vertices and contractions \cite{Fabila-Carrasco2020}). We will show that under some conditions, adding or deleting edges on the graph will preserve the permutation entropy of the signal $\X$.

\begin{proposition}\label{prp:adding}
	Let  $\textbf{X}$ be a graph signal over the graph $G=(\V,\E,\A)$ with entropy value $\PEG$ for $m=2$ and $L=1$. Let $i,j\in \V$ be two vertices such that $\Delta x_i<0<\Delta x_j$.  
	\begin{enumerate}
		\item If $\{i,j\}\notin\E$ and  $x_i<x_j$, then $PE_G=PE_{G'}$ where $G'=G+\{i,j\}$.
		\item If $\{i,j\}\in\E$ and  $x_j<x_i$, then $PE_G=PE_{G'}$ where $G'=G-\{i,j\}$.
	\end{enumerate}
\end{proposition}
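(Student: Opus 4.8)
The plan is to show that for $m=2$ and $L=1$, the permutation pattern assigned to \emph{every} vertex is unchanged when we add (or delete) the single edge $\{i,j\}$, which immediately forces $p_1$ and $p_2$ to be the same for $G$ and $G'$, hence $\PEG$ is unchanged. Recall that for $m=2$, $L=1$ the pattern at a vertex $v$ is determined by the sign of $(\Delta x)_v = x_v - \tfrac{1}{\deg_G(v)}\sum_{w\in\Nb_G(v)} x_w$; equivalently, by whether $x_v$ is larger or smaller than the neighbour-average $\tfrac1{\deg(v)}(\A\X)_v$. So I need to check that this sign is preserved at $i$, at $j$, and at all other vertices.

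First, note the edge $\{i,j\}$ only affects the neighbour-averages at $i$ and $j$ (every other vertex has exactly the same neighbourhood in $G$ and $G'$), so all vertices $v\notin\{i,j\}$ keep their pattern trivially. For the vertex $i$: in case (1) we pass from $G$ to $G'=G+\{i,j\}$, so we add the value $x_j$ to the sum over $i$'s neighbours and increase $\deg(i)$ by one. I would write the new average as a weighted average of the old average $\mu_i := \tfrac1{\deg_G(i)}\sum_{w\in\Nb_G(i)}x_w$ and the newly included value $x_j$: explicitly $\mu_i' = \tfrac{\deg_G(i)}{\deg_G(i)+1}\mu_i + \tfrac{1}{\deg_G(i)+1}x_j$. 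Since $\Delta x_i<0$ means $x_i<\mu_i$, and the hypothesis $x_i<x_j$ gives $x_i<x_j$, the new average $\mu_i'$ lies strictly between... more precisely, $\mu_i'$ is a convex combination of two quantities both strictly greater than $x_i$, so $x_i<\mu_i'$, i.e.\ $(\Delta' x)_i<0$ and the pattern at $i$ is preserved. Symmetrically for $j$: $\Delta x_j>0$ gives $x_j>\mu_j$, and $x_i<x_j$ gives $x_j>x_i$; the new average at $j$ is a convex combination of $\mu_j$ and $x_i$, both strictly less than $x_j$, so $x_j>\mu_j'$ and the pattern at $j$ is preserved. This handles case (1).

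Case (2) is the mirror image: deleting $\{i,j\}$ from $G$ removes the value $x_j$ from $i$'s neighbour-sum and decreases $\deg(i)$ by one, and the old average $\mu_i$ is itself the convex combination $\tfrac{1}{\deg_G(i)}x_j + \tfrac{\deg_G(i)-1}{\deg_G(i)}\mu_i'$ of $x_j$ and the new average $\mu_i'$. From $\Delta x_i<0$ (so $x_i<\mu_i$) together with the hypothesis $x_j<x_i$ (so $x_j<x_i<\mu_i$), solving for $\mu_i'$ shows $\mu_i'>\mu_i>x_i$, hence the pattern at $i$ is still "$x_i$ below its average". Likewise at $j$, from $x_j>\mu_j$ and $x_i>x_j$ one gets $\mu_j'<\mu_j<x_j$, preserving $j$'s pattern. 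A small point to dispatch is that removing the edge cannot create an isolated vertex (else the average is undefined): this is guaranteed because $\Delta x_i<0<\Delta x_j$ already witnesses that $i$ and $j$ each have at least one neighbour whose value differs appropriately — in fact the cleanest argument is that $\deg_G(i)\ge 1$ is forced and if $\deg_G(i)=1$ its only neighbour is $j$ with $x_i<x_j$, contradicting... actually one should just additionally assume, or observe from context ($G$ simple, no isolated vertices), that $\deg_G(i)\ge 2$ and $\deg_G(j)\ge 2$ in case (2); I would state this explicitly.

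The only real subtlety — the "hard part" — is bookkeeping the convex-combination inequalities with the correct strictness and making sure the degree-one edge case in part (2) is not left as a gap; the rest is a direct sign-tracking computation. Once both patterns at $i$ and $j$ are shown invariant, the multiset of patterns over all $N$ vertices is identical for $G$ and $G'$, so $p_1$ and $p_2$ are unchanged, and therefore $\PEG=\PEG'$ for $G'$ as claimed.
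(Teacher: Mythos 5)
Your proof is correct and takes essentially the same route as the paper's: you show the sign of $\Delta x$ (hence the $m=2$ pattern) is preserved at $i$ and $j$ and trivially at every other vertex, so the pattern frequencies and therefore $\PEG$ coincide; your convex-combination phrasing is just the paper's direct inequality manipulation $\deg_G(i)x_i<\sum_{k\in\Nb_G(i)}x_k \Rightarrow x_i(\deg_G(i)+1)<\sum_{k\in\Nb_G(i)}x_k+x_j$ in different clothing. The degree-one worry in case (2) that you left hanging is resolved by the very contradiction you began: if $\deg_G(i)=1$ then its only neighbour is $j$, so $\Delta x_i<0$ forces $x_i<x_j$, contradicting $x_j<x_i$ (and symmetrically for $j$), hence $\deg_G(i),\deg_G(j)\ge 2$ follows from the hypotheses and no extra assumption is needed — a point the paper's own proof does not address.
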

\begin{proof}
	We will prove $1$ and the proof for $2$ is similar.
	\begin{enumerate}
		\item First, we will prove that $\Delta x_i<0$ together with $x_i<x_j$ implies $\Delta_{G'} x_i<0$ where $G'=G+\{i,j\}$. It follows by:
		\begin{align*}
			\Delta x_i&<0 \\
			deg_G(i) x_i&< \sum_{k \in \Nb_G(i)}x_k\\
			x_i \left( deg_G(i)+1\right)&< \sum_{k \in \Nb_G(i)}x_k+x_j \\
			x_i&<\dfrac{1}{deg_{G'}(i)}\sum_{k \in \Nb_{G'}(i)}x_k\\
			\Delta_{G'} x_i&<0\;.
		\end{align*}
	In an analogous way, it can be shown that $0<\Delta x_j$ and $x_i<x_j$ imply that $0<\Delta_{G'} x_j$. Therefore, for each vertex $k\in \V$ the pairs $\left(x_k, (I-\Delta)x_k \right) $ preserve the same order on both graphs ($G$ and $G+\{i,j\}$). The relatives frequencies are equals, hence, their entropy are equal.	
	\end{enumerate}	
\end{proof}
In the previous result, we prove a condition that preserves not only the entropy value but their relative frequencies. Hence, we can apply in an iterative way to generalise the result with the following corollary.
\begin{corollary}\label{cor:edges}
	Let  $\textbf{X}$ be a graph signal over the graph $G=(\V,\E,\A)$ with entropy value $\PEG$ for $m=2$ and $L=1$. Define the sets with the following property
	\begin{align}
		E_0=\set{(i,j)}{i\in A, j\in B \text{ and } x_i<x_j}\label{eq:condition1}\\
		E_1=\set{(i,j)}{i\in A, j\in B \text{ and } x_j<x_i}\label{eq:condition2}
	\end{align}
	where $A=\set{i\in \V}{\Delta x_i<0}$
	and $B=\set{i\in \V}{0<\Delta x_i}$. 
	\begin{itemize}
		\item If $E'\subset E_0$ and $E'\cap \E=\emptyset$, then $PE_G=PE_{G'}$ where $G'=G+E'$.
		\item If $E'\subset E_1$ and $E'\subset \E$, then $PE_G=PE_{G'}$ where $G'=G-E'$.
	\end{itemize}
	
\end{corollary}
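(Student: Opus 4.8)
My plan is to obtain the corollary by applying Proposition~\ref{prp:adding} one edge at a time. The step that makes the iteration legitimate is an observation already implicit in the proof of Proposition~\ref{prp:adding}: a single admissible edge move preserves not just the value of $PE_G$ but the entire assignment of $m=2$, $L=1$ patterns to the vertices. Indeed, if an edge $\{i,j\}$ with $i\in A$, $j\in B$ and $x_i<x_j$ is added to $G$, that proof shows $\Delta x_i$ stays negative and $\Delta x_j$ stays positive, while for every other vertex $k$ neither $\deg(k)$ nor $\Nb(k)$ changes, so $\Delta x_k$ is unchanged outright; the analogous statement holds for a deletion as in part~2 of the proposition. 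Hence, for \emph{every} $k\in\V$, the sign of $\Delta x_k$ --- equivalently, the pattern carried by the pair $\textbf{y}_k=(x_k,(I-\Delta)x_k)$, including the tie case --- is the same before and after the move, so the sets $A$, $B$, the two relative frequencies, and therefore $PE_G$ are all preserved. I would isolate this one-move statement as the first step.

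Next I would treat the first bullet by induction on the number of added edges. Enumerate $E'=\{e_1,\dots,e_r\}$ in an arbitrary order and set $G_0=G$, $G_t=G_{t-1}+e_t$, so that $G_r=G'$. I would prove by induction on $t$ that $A$ and $B$ are the same for $G_t$ as for $G$, and that $PE_{G_t}=PE_{G_{t-1}}$. In the inductive step, writing $e_t=(i,j)$, the hypothesis $E'\subset E_0$ together with the induction hypothesis (that $A$, $B$ for $G_{t-1}$ equal those for $G$) gives $i\in A$, $j\in B$ and $x_i<x_j$; moreover $e_t\notin\E(G_{t-1})$, since $\E(G_{t-1})=\E\cup\{e_1,\dots,e_{t-1}\}$, $E'\cap\E=\emptyset$, and the $e_s$ are distinct. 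So Proposition~\ref{prp:adding}(1) and the one-move statement apply, yielding the claim for $t$; telescoping over $t$ gives $PE_{G'}=PE_G$.

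The second bullet is the same argument run with deletions and Proposition~\ref{prp:adding}(2), where now $E'\subset\E$ ensures each $e_t\in\E(G_{t-1})$. The only extra point I would check is that deletions never create an isolated vertex, so that $PE_{G_t}$ stays well defined: if $e_t=(i,j)$ and $\deg_{G_{t-1}}(i)=1$, then $\Nb_{G_{t-1}}(i)=\{j\}$, and $\Delta_{G_{t-1}}x_i<0$ would force $x_i<x_j$, contradicting $(i,j)\in E_1$; symmetrically for $j$, using $0<\Delta_{G_{t-1}}x_j$. I do not expect a genuine difficulty here: the whole argument is a bookkeeping induction on top of Proposition~\ref{prp:adding}. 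The one point that really has to be verified --- and the reason the proposition was stated so as to preserve relative frequencies rather than merely the entropy value --- is the one-move claim that the full sign pattern of $(\Delta x_k)_{k\in\V}$ is invariant, since that invariance is precisely what allows the individual moves to be composed.
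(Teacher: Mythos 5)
Your proof is correct and follows the same route the paper takes: the paper justifies the corollary precisely by noting that Proposition~\ref{prp:adding} preserves the relative frequencies (i.e.\ the sign pattern of $\Delta x_k$ at every vertex), so the edge moves can be applied iteratively, which is exactly your one-move invariance plus induction. Your extra check that deletions in the second bullet cannot create an isolated vertex is a small point the paper leaves implicit, but it does not change the argument.
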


\begin{example}
	Consider the graph $\G$ and signal $\X$ given in \Ex{permutationgraph}. Define the sets $A=\set{i\in \V}{\Delta x_i<0}=\{2,4,5,7,8\}$ and $B=\set{i\in \V}{0<\Delta x_i}=\{1,3,6\}$.
	
	The edge set given by $E_0=\{(1,4),(1,8),(3,8),(6,2),(6,8)\}$ fulfil the condition in \Eq{eq:condition1}. Define $G'=G+E'$ for any $E'\subset E_0$, then by \Cor{edges} follows the entropy is the same, i.e., $PE_G=PE_{G'}$. The case $E'=E_0$ is shown in Fig.~\ref{sub:G1}. 
	
	Similarly, $E_1=\{(3,5)\}$ (fulfil the condition in \Eq{eq:condition2}), then $G$ and $G-E_1$ have the same entropy. The graph $G-E_1$ is shown in Fig.~\ref{sub:G2}.

	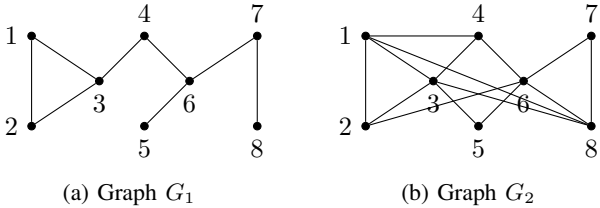
\begin{figure}[h]
		\subcaptionbox{Graph $\G_1$\label{sub:G1}}[.4\linewidth]		{\begin{tikzpicture}[baseline,
				vertex/.style={circle,draw,fill, inner sep=0pt,minimum
					size=1mm},scale=.3]
				\node (0) at (-1,1) [vertex,label=left:$2$] {};
				\node (1) at (-1,5) [vertex,label=left:$1$] {};
				\node (2) at (2,3) [vertex,label=below:$3$] {};
				\node (4) at (4,5) [vertex,label=above:$4$] {};
				\node (3) at (4,1) [vertex,label=below:$5$] {};
				\node (5) at (6,3) [vertex,label=below:$6$] {};
				\node (6) at (9,5) [vertex,label=above:$7$] {};
				\node (7) at (9,1) [vertex,label=below:$8$] {};				
				\draw[](0) edge node[below] {} (2);
				\draw[](1) edge node[below] {} (2);
				\draw[](2) edge node[below] {} (4);
				\draw[](4) edge node[right] {} (5);
				\draw[](0) edge node[right] {} (1);
				\draw[](3) edge node[below] {} (5);
				\draw[](5) edge node[below] {} (6);
				\draw[](6) edge node[below] {} (7);
				
		\end{tikzpicture} } 
		\subcaptionbox{Graph $\G_2$\label{sub:G2}}[.6\linewidth]
		{\begin{tikzpicture}[baseline,
				vertex/.style={circle,draw,fill, inner sep=0pt,minimum
					size=1mm},scale=.3]
				\node (0) at (-1,1) [vertex,label=left:$2$] {};
				\node (1) at (-1,5) [vertex,label=left:$1$] {};
				\node (2) at (2,3) [vertex,label=below:$3$] {};
				\node (4) at (4,5) [vertex,label=above:$4$] {};
				\node (3) at (4,1) [vertex,label=below:$5$] {};
				\node (5) at (6,3) [vertex,label=below:$6$] {};
				\node (6) at (9,5) [vertex,label=above:$7$] {};
				\node (7) at (9,1) [vertex,label=below:$8$] {};				
				\draw[](0) edge node[below] {} (2);
				\draw[](1) edge node[below] {} (2);
				\draw[](2) edge node[left] {} (3);
				\draw[](2) edge node[below] {} (4);
				\draw[](4) edge node[right] {} (5);
				\draw[](0) edge node[right] {} (1);
				\draw[](3) edge node[below] {} (5);
				\draw[](5) edge node[below] {} (6);
				\draw[](6) edge node[below] {} (7);
				
				\draw[](1) edge node[right] {} (4);
				\draw[](1) edge node[right] {} (7);
				\draw[](2) edge node[below] {} (7);
				\draw[](5) edge node[below] {} (0);
				\draw[](5) edge node[below] {} (7);				
				
		\end{tikzpicture} }

		\caption{ Examples of two graphs: $\G_1$ and $\G_2$. $\G_1$ is a subgraph of $\G$ and $\G$ is a subgraph of $\G_2$. Both graphs $\G_1$ and $\G_2$ preserve the entropy of the original graph $G$. }
		\label{fig:examplepeg2}
	\end{figure}
\end{example}
	In this sense, given a signal over a graph, with \Prp{adding} we can find structures that preserve not only the same numeric entropy value but the \textit{maximal} and \textit{minimal} values of the signal on the same vertices.  

 The \textit{invariance property} with respect to monotonic transformation of the time signal is an important property of the $\PE$, i.e., if $\X$ is a time series, and $f$ is an arbitrary strictly decreasing (or increasing) real function, then the classical $\PE$ of the time series $\X$ and $f(\X)$ are equal \cite{Bandt2002}. This function occurs, for example, when the data is measured with different equipment. In a similar scenario, the following proposition shows that some modification on the signal does not change the permutation entropy.   

\begin{proposition}\label{prp:cX}
	Let  $\textbf{X}$ be a graph signal over the graph $G$ and $c$ a real (non zero) constant function defined on the vertices. The entropy of the signals: $\X$, $c\X$ and $c+\X$ are equal.
\end{proposition}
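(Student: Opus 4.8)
The plan is to exploit that, with the definition in \eqref{eq:average}, every embedding vector $\textbf{y}_i^{m,L}$ is an \emph{affine} function of the signal, and then to check how the permutation pattern assigned in step~2 reacts to the two elementary operations $\X\mapsto c\,\X$ and $\X\mapsto c+\X$. Since each neighbourhood $\Nb_{kL}(i)$ depends only on $G$, the map $\X\mapsto\textbf{y}_i^{m,L}$ given by \eqref{eq:average} is linear in $\X$; and evaluated on the constant signal equal to $c$ on every vertex it returns $y_i^{kL}=\frac{1}{\card{\Nb_{kL}(i)}}\sum_{j\in\Nb_{kL}(i)}c=c$ for every $k$, that is, the embedding vector $(c,c,\dots,c)$. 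By linearity this gives, for every vertex $i$,
\[
 \textbf{y}_i^{m,L}(c\,\X)=c\,\textbf{y}_i^{m,L}(\X)
 \quadtext{and}
 \textbf{y}_i^{m,L}(c+\X)=\textbf{y}_i^{m,L}(\X)+(c,\dots,c)\,.
\]

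Next I would observe that adding a constant vector, or multiplying by a positive constant, leaves the pattern of each embedding vector untouched: shifting all $m$ coordinates by the same amount preserves every relation ``$\le$'' and every equality between coordinates, and multiplying all coordinates by $c>0$ does likewise, so the increasing rearrangement of step~2 — including the index-based tie-breaking convention introduced for the classical $\PE$ — produces the very same permutation $\pi$. Hence, for $c+\X$ in all cases and for $c\,\X$ when $c>0$, the partition of $\V$ into pattern classes coincides with that of $\X$, so the relative frequencies $p(\pi_1),\dots,p(\pi_k)$, and therefore $\PEG(m,L)$, are unchanged.

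The remaining case is $c\,\X$ with $c<0$, where $\textbf{y}_i^{m,L}(c\,\X)$ is a negative multiple of $\textbf{y}_i^{m,L}(\X)$. Multiplying by a negative scalar reverses every strict inequality among the coordinates, hence sends a pattern $\pi=(k_1,\dots,k_m)$ to its reversal $\overline{\pi}=(k_m,\dots,k_1)$; since $\pi\mapsto\overline{\pi}$ is an involution of the set of $m!$ patterns, passing from $\X$ to $c\,\X$ only relabels the pattern classes, so the multiset $\{p(\pi_1),\dots,p(\pi_k)\}$ is unchanged, and as $-\sum_i p(\pi_i)\ln p(\pi_i)$ depends on this multiset alone, $\PEG$ is again preserved. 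I expect the delicate point to be exactly the tie-breaking convention here: negation reverses strict comparisons but, by convention, keeps equal coordinates ordered by their index, so for non-generic signals one has to check that such tied blocks are carried over consistently; when no embedding vector has two equal coordinates the reversal argument applies verbatim, which is the situation in which, as already remarked for the classical $\PE$, the convention does not affect the result.
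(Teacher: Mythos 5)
Your proposal is correct and rests on the same core observation as the paper's proof: by \Eq{eq:average} the embedding vectors are affine in the signal, so those of $c\X$ and $c+\X$ are $c\,\textbf{y}_i^{m,L}$ and $c+\textbf{y}_i^{m,L}$, and the pattern statistics follow. Where you go beyond the paper is the case $c<0$: the paper simply asserts that the proportions of the patterns are preserved, which is literally true only for $c+\X$ and for $c>0$; for negative $c$ the patterns themselves change, and your additional step --- negation sends each pattern $\pi$ to its reversal $\overline{\pi}$, an involution of the $m!$ patterns, so only the labels of the pattern classes change while the multiset of relative frequencies, and hence the Shannon entropy, is invariant --- is genuinely needed and absent from the paper's argument. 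The tie-breaking caveat you flag at the end is also a real one, and in fact it bites: with the index-based convention, two embedding vectors sharing the same pattern can be sent by negation to different patterns (for $m=3$, the vectors $(2,1,1)$ and $(3,1,2)$ both have pattern $(2,3,1)$, while their negatives $(-2,-1,-1)$ and $(-3,-1,-2)$ have patterns $(1,2,3)$ and $(1,3,2)$), so a pattern class can split and the entropy can in principle change. Hence for $c<0$ the statement, and both your argument and the paper's, implicitly require that no embedding vector has tied coordinates (or a tie-breaking rule compatible with reversal); your closing remark identifies precisely the gap that the paper's own proof passes over silently.
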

\begin{proof}
	For any $2\leq m\in\N$ and $L\in\N$, the embedding vector for the graph signal $\X$ are defined as $\textbf{y}_i^{m,L}=\left( y_{i}^{kL}\right)$ (see \Eq{eq:average}). It is easy to show that the embedding vectors for $c\X$ are $c\textbf{y}_i^{m,L}$ and the embedding vectors for $c+\X$ are $c+\textbf{y}_i^{m,L}$. Therefore, the proportion of the patterns in the original signal $\X$ are preserved in the signals $c\X$ and $c+\X$. Therefore, its entropy values are equal. 
\end{proof}

The previous proposition shows a difference with respect to the definition of \textit{smoothness} on graph signals. Formally, for a graph signal $\X$ on the $\G$, the smoothness is measured in terms of the quadratic form of the normalised Laplacian
\begin{equation}\label{eq:smooth}
	\X^T\Delta\X:=\dfrac{1}{2}\sum_{i\sim j}(x_i-x_j)^2\;.
\end{equation}

Therefore, the smoothness of the signal $c\X$ is different (in general) from the smoothness of the signal $\X$. The algorithm $\PEG$ is interested in the change of the patterns rather than the changes of values of the signal as in the smoothness definition. 
%

\subsection{Permutation entropy for signals on directed graph}\label{subsec:directedgraph}
In the Section~\ref{subsec:permutation-entropy} we introduced the permutation entropy for undirected graphs. As a particular case, in this section, we introduce a permutation entropy algorithm for directed graphs, denoted as $\PE_{\DG}$. 

A \emph{directed graph} or \emph{digraph} is a graph where each edge has an orientation or direction. The directed edge (called also an \emph{arc}) is an order pair $(i,j)$ and it is drawn as an arrow from the vertex $i$ to the vertex $j$.
A \emph{directed path} on $k$ vertices is a directed graph which joins a sequence of different vertices with all the edges in the same direction and is denoted as $\overrightarrow{P}$, i.e. its vertices are $\{1,2,\dots,k\}$ and its arcs $(i,i+1)$ for all $1\leq i \leq k-1$.

 The permutation entropy for signals on directed graphs will be almost identical to the presented in Section~\ref{subsec:permutation-entropy} except for a small change in the construction of the embedding vector.
\begin{enumerate}
	\item For $2\leq m\in\N$ the \emph{embedding dimension} and $L\in\N$ the \emph{delay time}, define the set
\begin{equation}\label{eq:directedset}
	V^*=\set{i\in \V }{ \overrightarrow{\Nb}_{(m-1)L}(i)\neq \emptyset }\:,
\end{equation}	
	where \begin{equation}\label{eq:set}
		\overrightarrow{\Nb}_k(i)=\set{j\in\V}{\scalebox{.8}[1.0]{it exists a directed path on $k$ edges from $i$ to $j$}}.
	\end{equation}
	 We construct the embedding vector $\textbf{y}_i^{m,L}\in\R^m$ given by
\begin{equation}\label{eq:embdir}
	\textbf{y}_i^{m,L}=\left( y_{i}^{kL}\right)_{k=0}^{m-1}=\left(y_i^0,y_{i}^{L},\dots y_{i}^{(m-1)L}\right)\; ,
\end{equation}
for all $i\in V^*$  where
\begin{align}
	y_{i}^{kL}&= \frac 1 {\card{\overrightarrow{\Nb}_{kL}(i)}}  \sum_{j \in 	\overrightarrow{\Nb}_{kL}(i)} x_j\;. \label{eq:average2}
\end{align}
\end{enumerate}

The step $2$ and $3$ are the same as in Section~\ref{subsec:permutation-entropy}. The next proposition shows that the classical permutation entropy is the same if we consider the time series as a signal over a directed path. Therefore, we generalise the $\PE$ for all directed graphs with the same values that the original one. 
\begin{proposition}\label{prp:equal}
	Let $\textbf{X}=\left\{x_i\right\}_{i=1}^{N}$ be a time series and consider $G=\overrightarrow{P}$ the directed path on $N$ vertices, then 	for all $m$ and $L$ the equality holds:
	\[\PE(m,L)=\PE_{G}(m,L)\;.\]
\end{proposition}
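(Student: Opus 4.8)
The plan is to reduce everything to a single observation: on the directed path $\overrightarrow{P}$ the family of embedding vectors produced by the $\PE_G$ algorithm of Section~\ref{subsec:directedgraph} is \emph{literally} the family of classical embedding vectors $\textbf{x}_i^m(L)$ from \Eq{eq:embPE}. Steps~2 and~3 of the two algorithms are word-for-word the same (assign to each vector its ordinal pattern $\pi$ using the same tie-breaking convention, then form $-\sum_i p(\pi_i)\ln p(\pi_i)$ from the relative frequencies), so once the two indexed families of vectors agree, the distinct patterns and their frequencies agree and the two entropies are equal.

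First I would compute the directed neighbourhoods on $\overrightarrow{P}$. Its only arcs are $(i,i+1)$ for $1\le i\le N-1$, so a directed path on exactly $k$ edges starting at $i$ exists if and only if $i+k\le N$, and when it exists it is the unique path $i\to i+1\to\cdots\to i+k$. Hence $\overrightarrow{\Nb}_k(i)=\{i+k\}$ when $i+k\le N$ and $\overrightarrow{\Nb}_k(i)=\emptyset$ otherwise; in particular $\card{\overrightarrow{\Nb}_{kL}(i)}=1$ whenever $i+kL\le N$, so \Eq{eq:average2} collapses to $y_i^{kL}=x_{i+kL}$. Next I would match the admissible index ranges: by \Eq{eq:directedset},
\[
V^*=\set{i\in\V}{\overrightarrow{\Nb}_{(m-1)L}(i)\neq\emptyset}=\set{i}{i+(m-1)L\le N}=\{1,2,\dots,N-(m-1)L\},
\]
which is exactly the range of $i$ for which \Eq{eq:embPE} defines $\textbf{x}_i^m(L)$. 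Combining the two facts, for every $i\in V^*$,
\[
\textbf{y}_i^{m,L}=\left(y_i^0,y_i^L,\dots,y_i^{(m-1)L}\right)=\left(x_i,x_{i+L},\dots,x_{i+(m-1)L}\right)=\textbf{x}_i^m(L).
\]

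With the two algorithms now processing the same finite multiset of vectors, applying the (identical) Steps~2 and~3 yields the same list of distinct permutations $\pi_1,\dots,\pi_k$ with the same relative frequencies $p(\pi_1),\dots,p(\pi_k)$, whence $\PE(m,L)=\PE_G(m,L)$ for all $m$ and $L$. I do not anticipate a genuine obstacle: the only point needing care is verifying that the directed neighbourhood $\overrightarrow{\Nb}_k(i)$ on $\overrightarrow{P}$ is a singleton (so that the averaging in \Eq{eq:average2} is trivial) and that $V^*$ coincides with the classical admissible index set — both of which are immediate from the structure of the directed path.
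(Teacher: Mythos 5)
Your argument is correct and follows essentially the same route as the paper's proof: compute $\overrightarrow{\Nb}_{kL}(i)=\{i+kL\}$ on the directed path, identify $V^*=\{1,\dots,N-(m-1)L\}$ with the classical index range so that \Eq{eq:average2} collapses to $y_i^{kL}=x_{i+kL}$ and $\textbf{y}_i^{m,L}=\textbf{x}_i^m(L)$, then invoke the identity of Steps~2 and~3. No gaps; your explicit verification that the directed neighbourhoods are singletons is exactly the point the paper's proof also rests on.
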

\begin{proof}
For the embedding dimension $m$, delay time $L$ and $\G$ the directed path with $N$ vertices, then $\overrightarrow{\Nb}_k(i)=\{i+k\}$ for all $1\leq k\leq N-i$ and $\emptyset$ otherwise (see~\Eq{eq:set}). 

The set defined in \Eq{eq:directedset} is $V^*=\{1,2,3,\dots,N-(m-1)L \}$. Then, for all $i\in V^*$, by \Eq{eq:average2}:
\begin{align*}
	y_{i}^{kL}&= \frac 1 {\card{	\overrightarrow{\Nb}_{kL}(i)}}  \sum_{j \in	\overrightarrow{\Nb}_{kL}(i)} x_j=x_{i+kL}\;. 
\end{align*}
Therefore, the embedding vector $\textbf{y}_i^{m,L}\in\R^m$ defined by \Eq{eq:embdir} is 
\begin{equation*}
	\textbf{y}_i^{m,L}=\left( y_{i}^{kL}\right)_{k=0}^{m-1}=\left(x_i,x_{i+L},\dots x_{i+(m-1)L}\right)\; ,
\end{equation*}
for all $i\in V^*=\{1,2,3,\dots,N-(m-1)L \}$, hence the are exactly the same embedding vectors defined in the original $\PE$ in \Eq{eq:embPE}, i.e. $\textbf{y}_i^{m,L}=\textbf{x}_i^m(L)$. Because the step 2) and 3) in both algorithms are the same, we conclude $\PE=\PEG$.
\end{proof}

\subsection{Permutation entropy for signals on weighted graph.}\label{subsec:weightedgraph}
In most of the examples, the adjacency matrix usually it is enough. Nevertheless, the previous results and algorithms can be generalised for weighted graphs.

	A \emph{weighted undirected graph} $G$ is defined as the triple $G = (\V,\E,\W)$
	which consist of a finite set of vertices or nodes $\V=\{1,2,3,\dots, N\}$, an edge set $\E = \{(i,j): i,j\in\V\}$  and $\W$
	is the corresponding $n\times n$ symmetric adjacency matrix weighted on edges with entries $0\leq w_{i j}=w_{j i}$ the weight of edge $(i,j)$.

Observe that $(\W^k) _{ij}$ is the sum of the product of all the weights of all the walks from the vertex $i$ to the vertex $j$ of length exactly $k$. We define a function on the vertices $\map{\deg^k}{\V}{\R}$ given by 
\begin{equation}\label{eq:weightver}
	\deg^k(i):=\sum_{j\in \V}(\W^k) _{ij}=\sum_{j\in \V} (\W^k)_{ji}\; .
\end{equation}

Let $\textbf{X}=\left\{x_i\right\}_{i=1}^{N}$ be a signal on the graph $G = (\V,\E,\W)$, the permutation entropy of the signal $\textbf{X}$ on the weighted graph $\G$ is the same that the presented in Section~\ref{subsec:permutation-entropy}, but replace the \Eq{eq:average} by the following:

\begin{equation*}
	y_{i}^{kL}	=\frac{1}{\deg^k(i)}(\W^{kL}\X)_i\;.
\end{equation*}
Similarly to Section~\ref{subsec:directedgraph}, we can extend the algorithm for weighted directed graphs.
\section{Experiments and discussion}
\label{sec:exp}
In this section, we apply our algorithm to synthetic and real data, including signals on 1D (time series), 2D (image) and irregular domains (graph). We show that $\PEG$ is a suitable generalisation of the original $\PE$ for time series, but with the advantage that the input could be any graph signal. 

In 1D, the equality $\PEG=\PE$ holds if the underlying graph $G$ is the directed path (\Prp{equal}). The values differ slightly when $G$ is an undirected graph. However, $\PEG$ can detect different dynamics for synthetic signals (logistic map, autoregressive models) and for real signals (fantasy data set). In case the input is 2D, our algorithm gives similar results to the recently introduced two-dimensional permutation entropy~\cite{Morel2021}.  

Finally, we apply the algorithm for signals defined in general graphs. Fixing the underlying graph $\G$ and consider a signal $\X$, we show the $\PEG$ value depends on the irregularity of the signal. We also consider the reverse case, fixing the signal $\X$ (for example, a set of $n$ random values). We consider several underlying graphs $\G$ with $\X$ as a graph signal, we study the impact on the graph structure has on the entropy measure.  

\subsection{Examples on 1D and the classical permutation entropy}\label{subsec:1D}
Let $\textbf{X}=\left\{x_i\right\}_{i=1}^{N}$ be a time series. In this section we consider three underlying graphs: $\G_1$ a directed path, $\G_2$ an undirected path, and $\G_3$ a directed path with the reverse orientation respect $\G_1$. For any $m$ and $L$, recall that the classical $\PE$ gives equal results as $\PE_{G_1}$ (\Prp{equal}), hence the permutation for graph signals has the same properties as the classic one in 1D. Moreover, we verified computationally that the values of $\PE_{G_1}$ are almost exactly the same as $\PE_{G_3}$, and for $N>30$, its difference is always in the order of $10^{-16}$ (computational accuracy of Matlab).
\subsubsection{The logistic map}\label{subsubsec:logisticmap}

$\PE$ has been used to detect dynamical changes in time series \cite{Bandt2002, Cao2004}. A commonly used example to show the performance of $\PE$ is the logistic map, given by
\[x_{n+1}=r x_n(1-x_n)\;.\]

The analysis is relevant for the parameter $r$. Thus, we vary the parameter $r$ from $3.55$ to $4.0$ with increments $r$ in steps of $10^{-5}$ at each iteration, we define the sequence given by $\X(r)=\{x_i\}_{i=1}^N$. The initial value is $x_0=0.65$ and we consider $N=2^{14}$ points. Fig.~\ref{sub:logisticA} shows the time series, where each point of the discrete time is plotted for each value of $r$, i.e., the bifurcation diagram for the logistic map for  $r\in[3.55, 4.0]$.

We created $4501$ time series, each consisting with $2^{14}$ points. For each sequence, we consider two underlying graph: $\G_1$ a directed path and $G_2$ an undirected path, both on $N$ vertices. Finally, we compute its permutation entropy for $m=3$ and $L=1$ (see Fig.~\ref{sub:logisticB}). 

\begin{figure}
	\begin{subfigure}[h]{0.5\textwidth}
		\hspace*{-.425cm}\includegraphics[scale=.29]{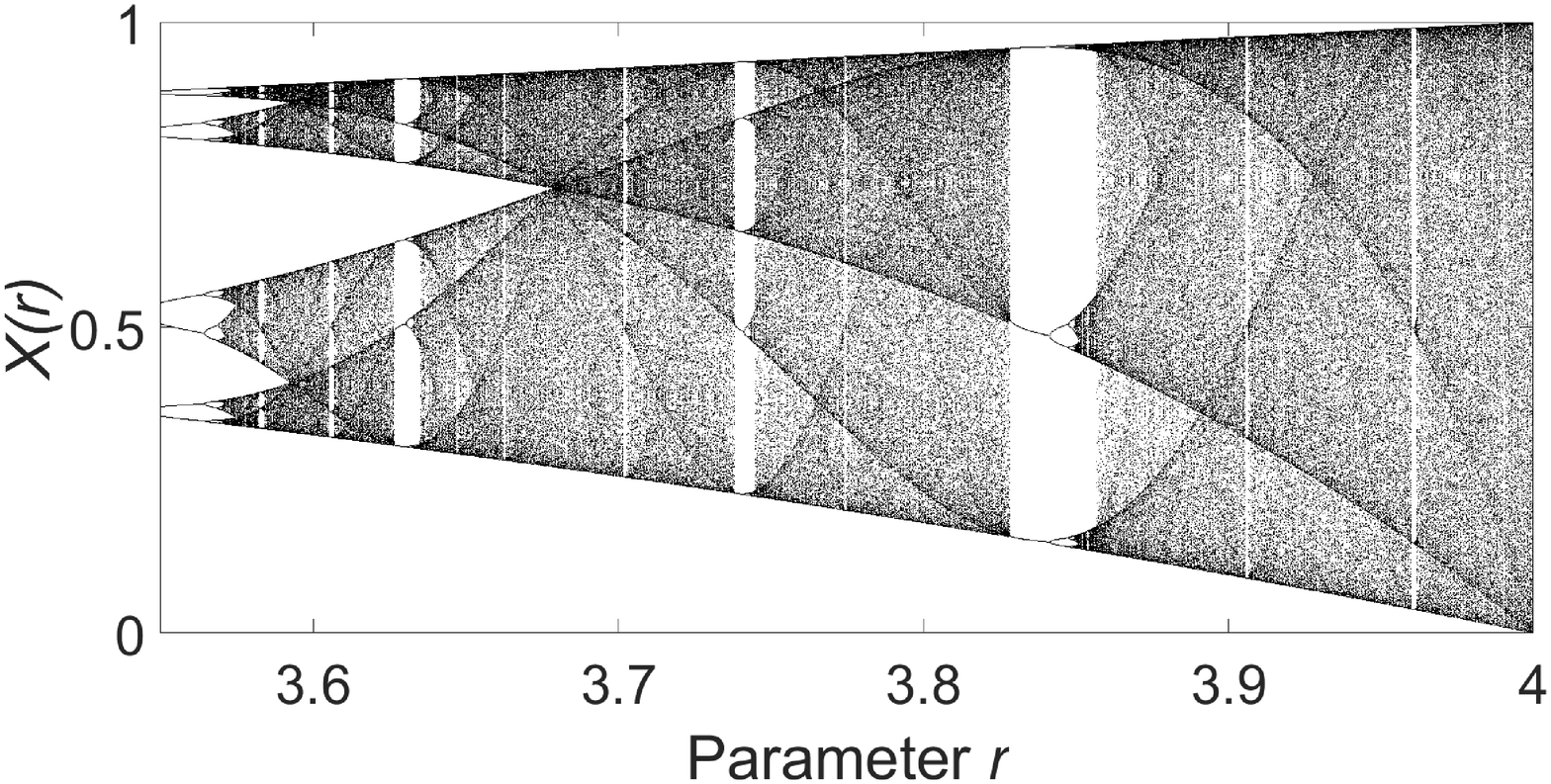}
		\caption{The logistic map with parameter $r$ changing from $3.5$ to $4$.}
		\label{sub:logisticA}
	\end{subfigure}\hfill
	\begin{subfigure}[b]{0.5\textwidth}
		
		\includegraphics[scale=.29]{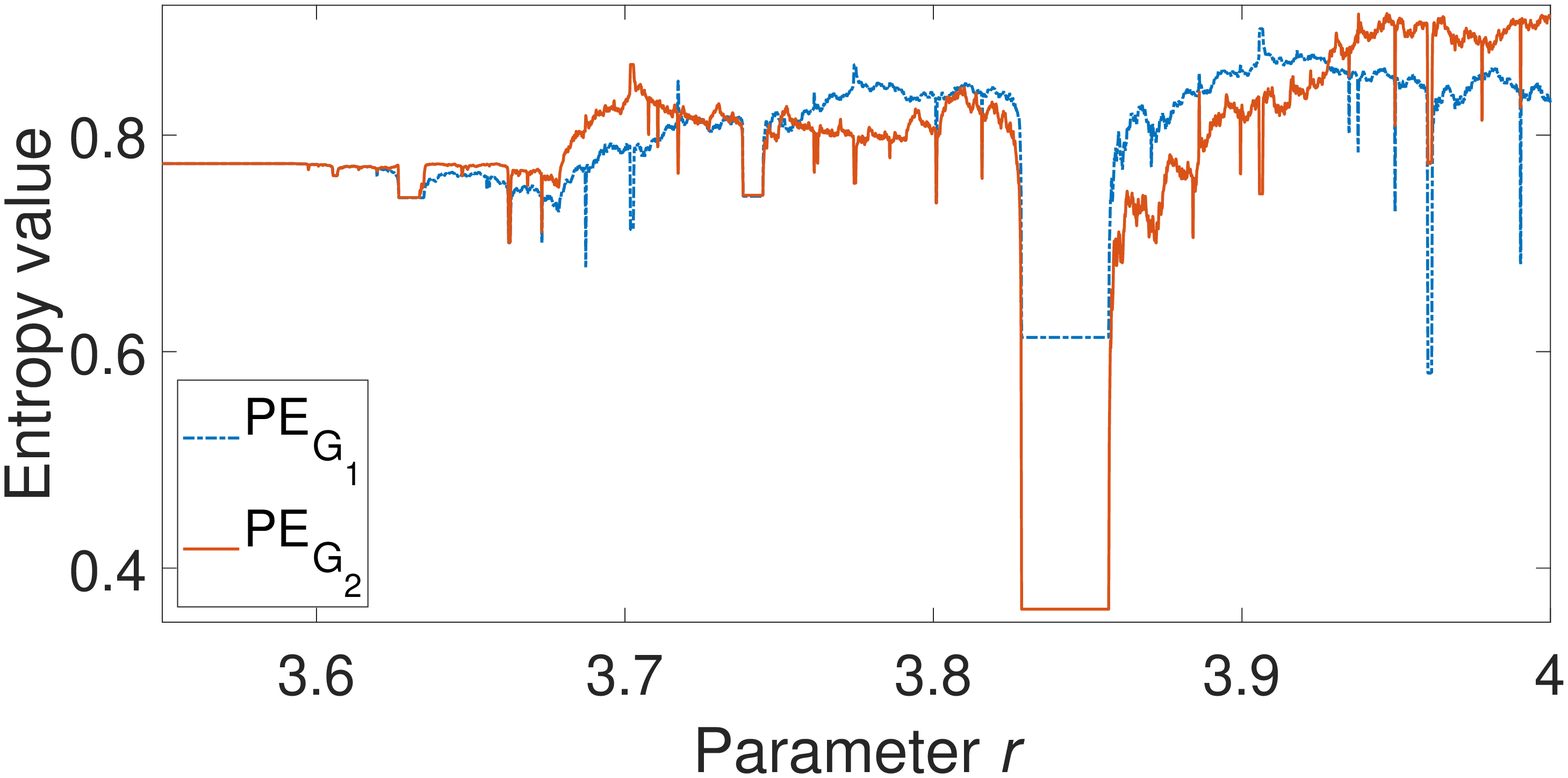}
		\caption{Entropy values computed with underlying graph $\G_1$ (directed path) and $\G_2$ (undirected path). Both algorithms use $m=3$ and $L=1$.}
		\label{sub:logisticB}	
	\end{subfigure}
	\caption{ }
	\label{fig:logistic}
\end{figure}
It is known that chaotic behaviour starts for $3.5699\leq r \leq 4$. The entropy algorithm is able to detect \emph{island of stability}, i.e., values of $r$ such the data sequence shows non-chaotic behaviour. The largest window is $1+\sqrt{8}\approx3.8284<r<3.8415$, this range of $r$ shows oscillation among three values \cite{Zhang2010}. The algorithm (with both underlying graphs $\G_1$ and $\G_2$) detects the window (for any embedding dimension $m$). However, the wider gap between the values for $\G_2$ indicates a large sensitivity of the algorithm to detects the changes of the dynamic on the data. A similar effect in other islands of stability occurs. This fact is in agreement with other previous studies \cite{Bandt2002, Cao2004}.   
  	
\subsubsection{Heart beat time series} The Fantasia database has been analysed widely to validate the performance of some entropies algorithms~\cite{Chen2019,Bian2012}. We use $10$ heart beat time series: $5$ correspond to young subjects (aged between 21 and 34 years) and 5 recordings from elderly subjects (aged between 68 and 85 years). Each time series is divided into samples of $800$ points with an overlap of $200$ points. The classical $\PE$ is computed (or equivalently, its permutation graph entropy for the directed path) for each sample. We also consider each time series as a graph signal on the undirected path, and $\PEG$ is computed for each case. We consider the embedding dimensions $2\leq m\leq 8$ for the computation. The averaged entropy values with their standard error bars are shown in \Fig{fheart}. 

The analysis shows that the elderly and young subjects are not indistinguishable by the classical $\PE$ for $3\leq m \leq 6$. Considering $\X$ as a graph signal on the undirected path, the algorithm $\PEG$ can difference the subjects for all embedding dimensions (except $m=2$). Changing the size of the samples and/or intersection does not change this behaviour. In addition, we observe that the entropy values of the elderly subjects are consistently higher than the young subjects for all embedding dimensions with $\PEG$. In contrast, the order in $\PE$ values depends on the parameter $m$, that is, the ranking of elderly and young people is not consistent.

\begin{center}
	\begin{figure}
	\centering
	\includegraphics[scale=.29]{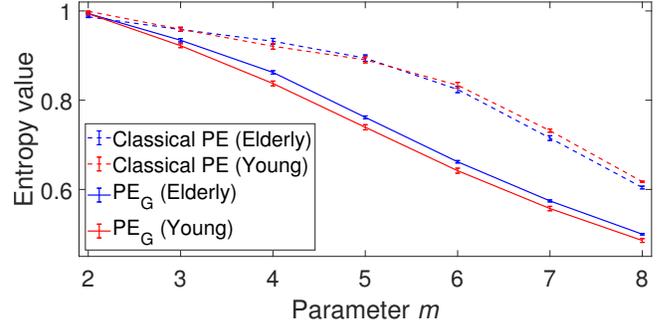}
	\caption{Averaged entropy values and standard error bars for embedding dimensions $2\leq m\leq 8$.}
	\label{fig:fheart}	
\end{figure}
\end{center}

\subsection{Permutation entropy for images (2D)}\label{subsec:1D}
One of the main advantages of our algorithm is the fact that it can be applied on any graph, including the directed graph shown in Fig. \ref{sub:directed-graph} (or its undirected version), where any signal can be regarded as an image. Therefore the permutation entropy (described in Section~\ref{subsec:directedgraph}) gives us a metric of the regularity/complexity of images. 

For the analysis, we choose the directed graph $\overrightarrow{G}$ because: 1) the directed adjacency matrix has more entries equal to zero than the undirected version and hence the algorithm is faster, 2) the algorithm $\PE_{2D}$ presented in~\cite{Morel2021} (and almost every 2D algorithm) implicitly uses this orientation in the vectorisation, 3) the orientation preserves more information of the \textit{geometry} of the graphs and hence gives us better results, 4) if $\overrightarrow{G}$ is a 2D graph with size $N\times1$ or $1\times N$, then $\PE_{\overrightarrow{G}}$ is equal to the classical $\PE$, hence, our algorithm is a natural generalisation and 5) choosing vertex $1$ or any other vertex as an origin of the orientation gives almost identical results, because of its symmetry.  

To assess the ability of $\PEG$ and similarly to \cite{Silva2016,Azami2017a}, we use the two-dimensional processing $\MIX_{2D}$.
\subsubsection{$\MIX_{2D}$  processing}
Let $X_{i,j}=\sin\left( \frac{2 \pi i}{12}\right)+\sin\left( \frac{2 \pi j}{12}\right)$ and let $Z_{i,j}$ be a random variable where $Z_{i,j}=0$ with probability $1-p$ and $Z_{i,j}=1$ with probability $p$. In addition, consider $Y_{i,j}$ a matrix of random values ranged in $[-\sqrt 3,\sqrt 3]$. The $\MIX_{2D}$ process is defined by the equation:
\begin{equation}\label{eq:mix}
	MIX_{2D}(p)_{i,j}=\left( 1-Z_{i,j}\right) X_{i,j}+Z_{i,j} Y_{i,j}\;. 
\end{equation}
\Fig{mixwithp} shows samples for different values of $p$, and the underlying graph consider.

\begin{figure}
	\centering
	\subfloat[Illustration of digraph $\overrightarrow{G}$]{\begin{tikzpicture}[scale=.8,baseline=1mm]
			\node[label={\small 1}] (a) at (0,2) {};
			\fill[color=black] (0,2) circle (0.04);
			\foreach \x in {0,1,...,2} {
				\foreach \y in {0,1,...,2} {
					\fill[color=black] (\x,\y) circle (0.04);
				}
			}
			\foreach \x in {0,1,...,2} {	
				\foreach \y in {0,1} {
					
					\path [>=latex,<-] (\x,\y) edge (\x,\y+1);
				}
			}
			\foreach \x in {0,1} {	
				\foreach \y in {0,1,2} {

					\path [>=latex,->] (\x,\y) edge (\x+1,\y);
				}
			}
			\foreach \x in {0,1} {
				\foreach \y in {1,2} {	
					\path [>=latex,<-] (\x+1,\y-1) edge (\x,\y);
					\path [>=latex,->] (\x+1,\y) edge (\x,\y-1);
				}
			}
		\end{tikzpicture}
		\label{sub:directed-graph}}\quad\quad
	\subfloat[$p=0$]{\includegraphics[scale=.5]{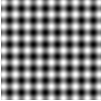}%
		\label{fig_first_case}}\quad\quad
	\subfloat[$p=0.1$]{\includegraphics[scale=.5]{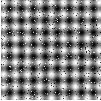}%
		\label{fig_second_case}}\quad\quad\\
	\subfloat[$p=0.25$]{\includegraphics[scale=.5]{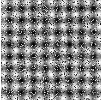}%
		\label{fig_first_case}}\quad\quad
	\subfloat[$p=0.5$]{\includegraphics[scale=.5]{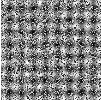}%
		\label{fig_first_case}}\quad\quad
	\subfloat[$p=0.9$]{\includegraphics[scale=.5]{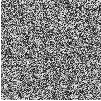}%
		\label{fig_first_case}}
	\caption{Examples of images generated by the MIX process in \Eq{eq:mix}. The size of each image is $100\times 100$ pixels.}
	\label{fig:mixwithp}
\end{figure}
To understand the effect of size of image, we create 10 different realization of $\MIX_{2D}(p)$ for each $p=0.1,0.25,0.5,0.9$ whose size changed from $10\times 10$ to $100\times 100$ (for larger size, the results are similar). For each realisation, we compute its $\PE_{\DG}$ (Fig.~\ref{sub:directed-graph}) with embedding dimension $m=6$. In \Fig{allp} is shown the mean and standard deviation values of $\PE_{\DG}$. We also compute the $\PE_{2D}$ with embedding dimension $d_x=3$ and $d_y=2$  (see~\cite{Morel2021}). In both methods, $6!$ permutation patterns are possible and the results are similar.
\begin{figure}[h]
	\centering
	\includegraphics[scale=.29]{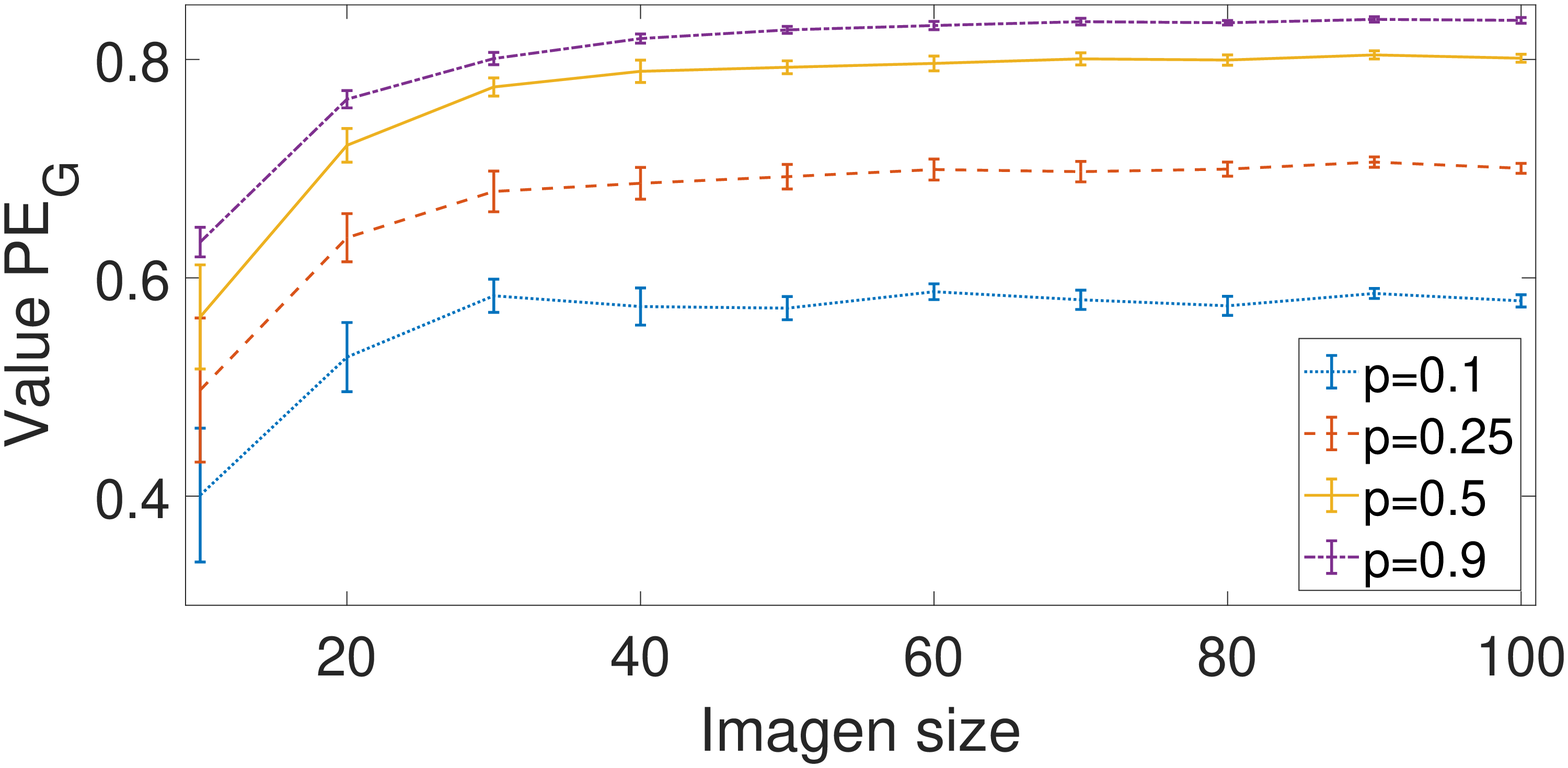} 
	\caption{Mean value and standard deviation of results obtained by $\PEG$ computed from $20$ realisations from MIX processing.}
	\label{fig:allp}
\end{figure}

As $p$ goes to $1$, the image gets closer to random noise (more irregular) and then the $\PE_{\DG}$ gets close to $1$. In particular, if $r<s$, then the entropy value of $\MIX_{2D}(r)$ is smaller than $\MIX_{2D}(s)$. These results are in agreement with the bidimensional entropy algorithms as the distribution \cite{Azami2017a} and sample entropy \cite{Silva2016}. 

\subsubsection{Artificial periodic and synthesized textures}
We use the same six periodic textures and their corresponding synthesized textures (as in~\cite{Morel2021}) to show how $\PEG$ changes when a periodic turns into a synthesized texture. The images dataset are downloaded from \cite{web1}.  The original textures and their corresponding texture (sized $256\times 256$) are depicted in the same order in \Fig{texture}.

\begin{figure}[h]
	\centering
	\subfloat[]{\includegraphics[width=1cm]{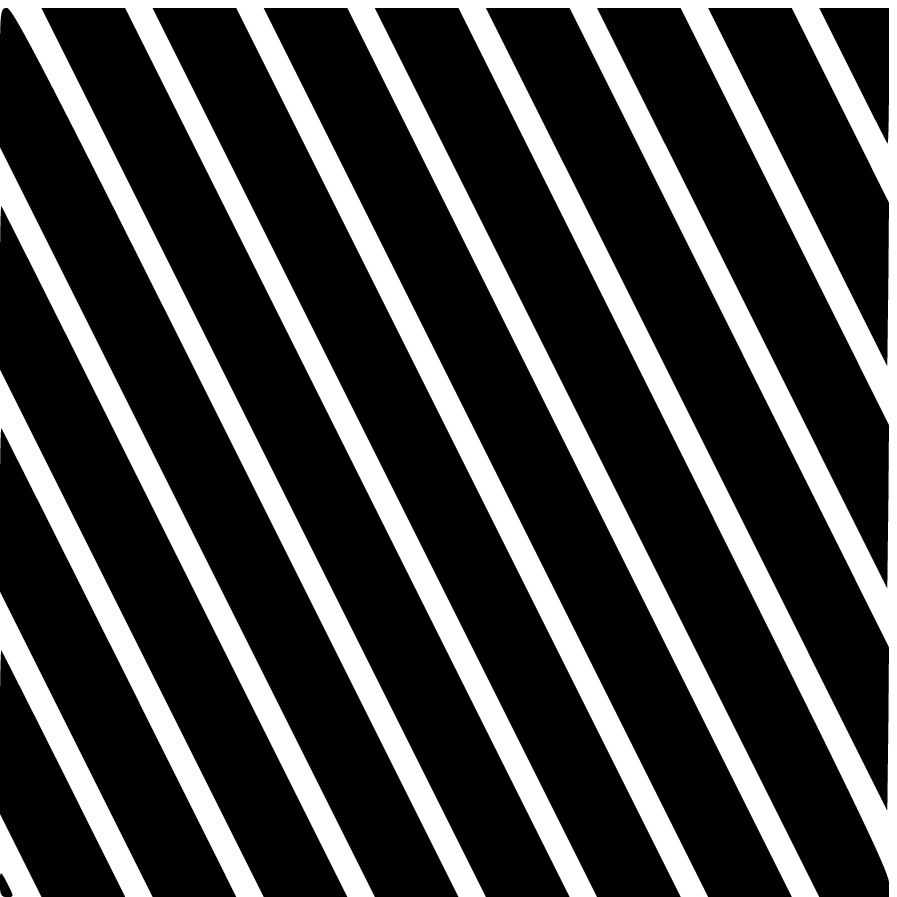}}\quad
	\subfloat[]{\includegraphics[width=1cm]{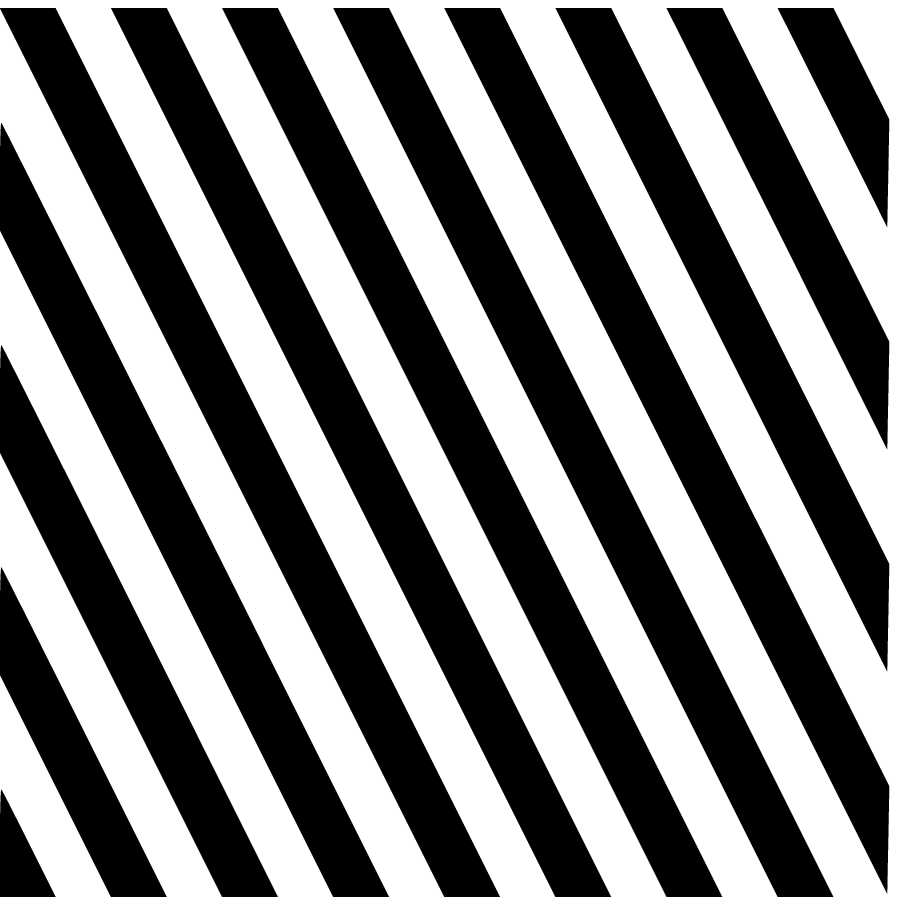}}\quad
	\subfloat[]{\includegraphics[width=1cm]{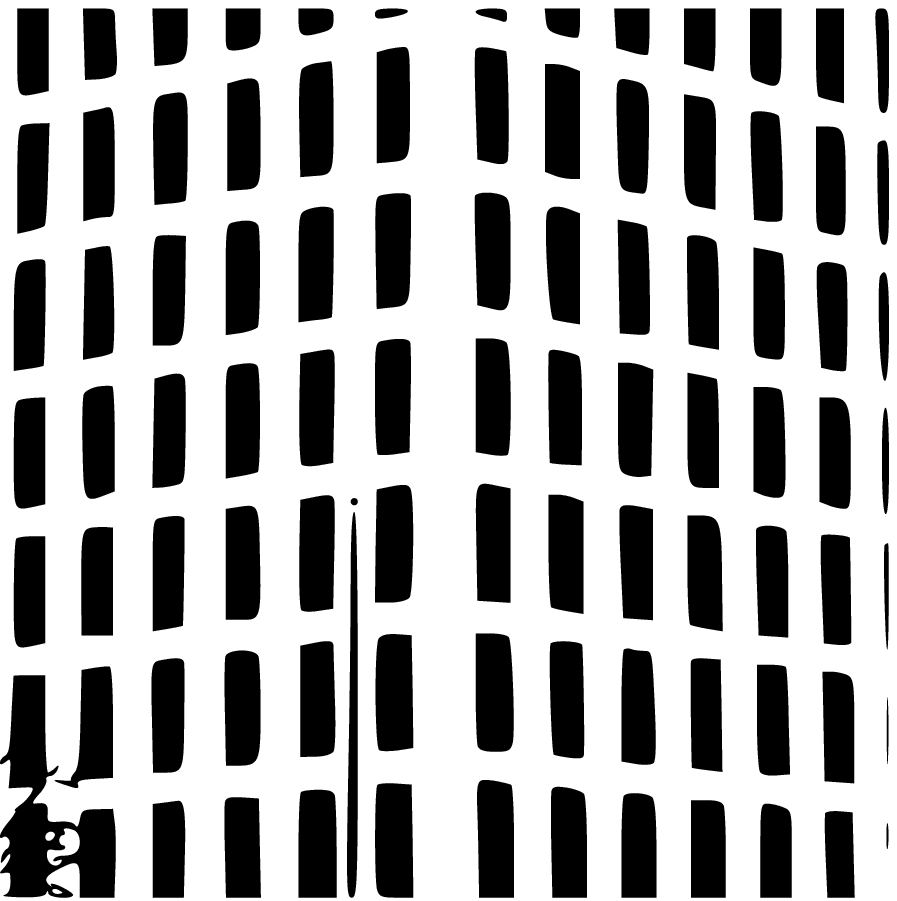}}\quad
	\subfloat[]{\includegraphics[width=1cm]{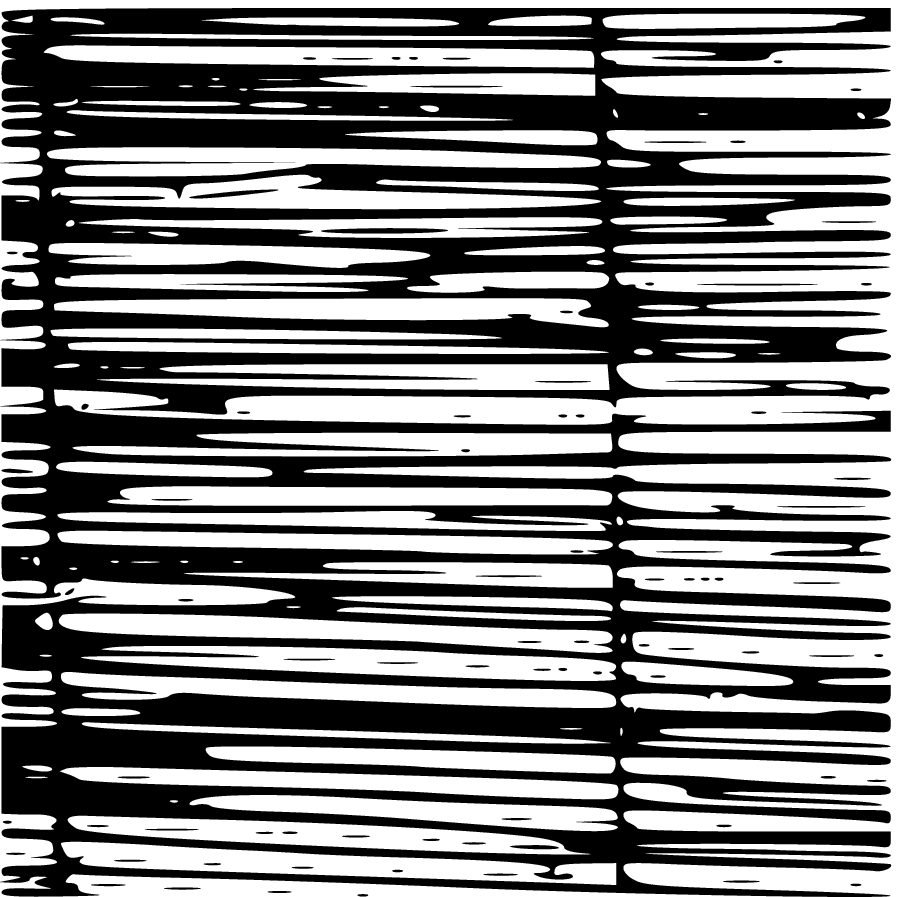}}\quad
	\subfloat[]{\includegraphics[width=1cm]{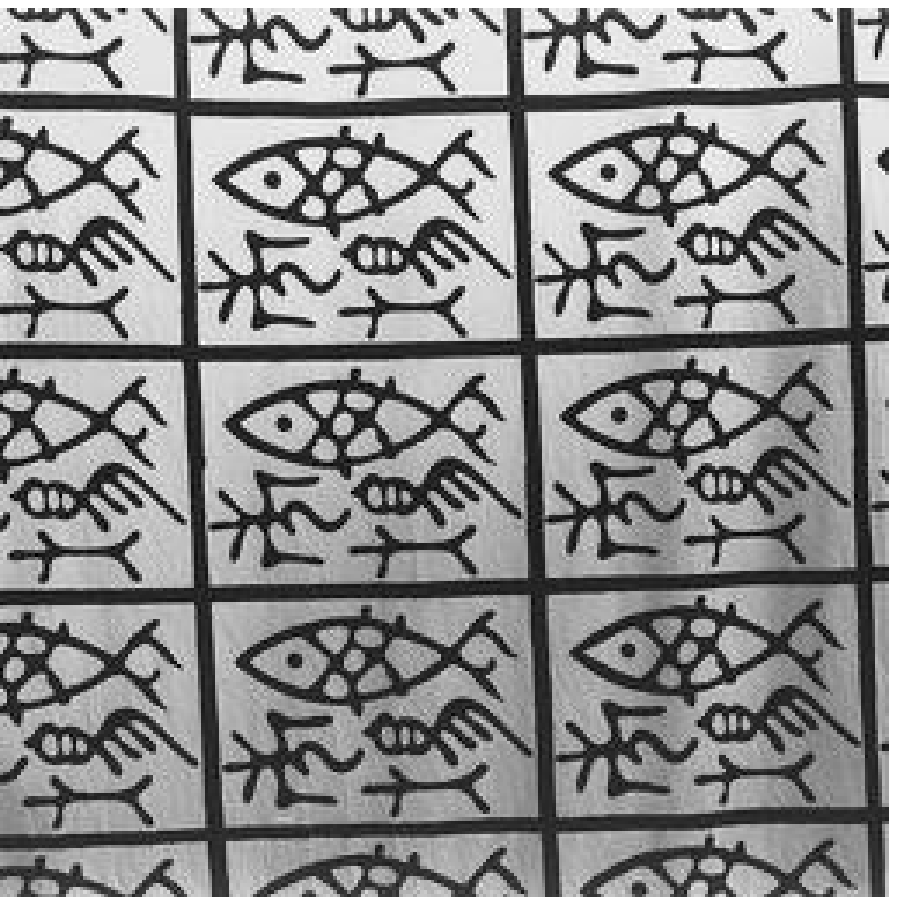}}\quad
	\subfloat[]{\includegraphics[width=1cm]{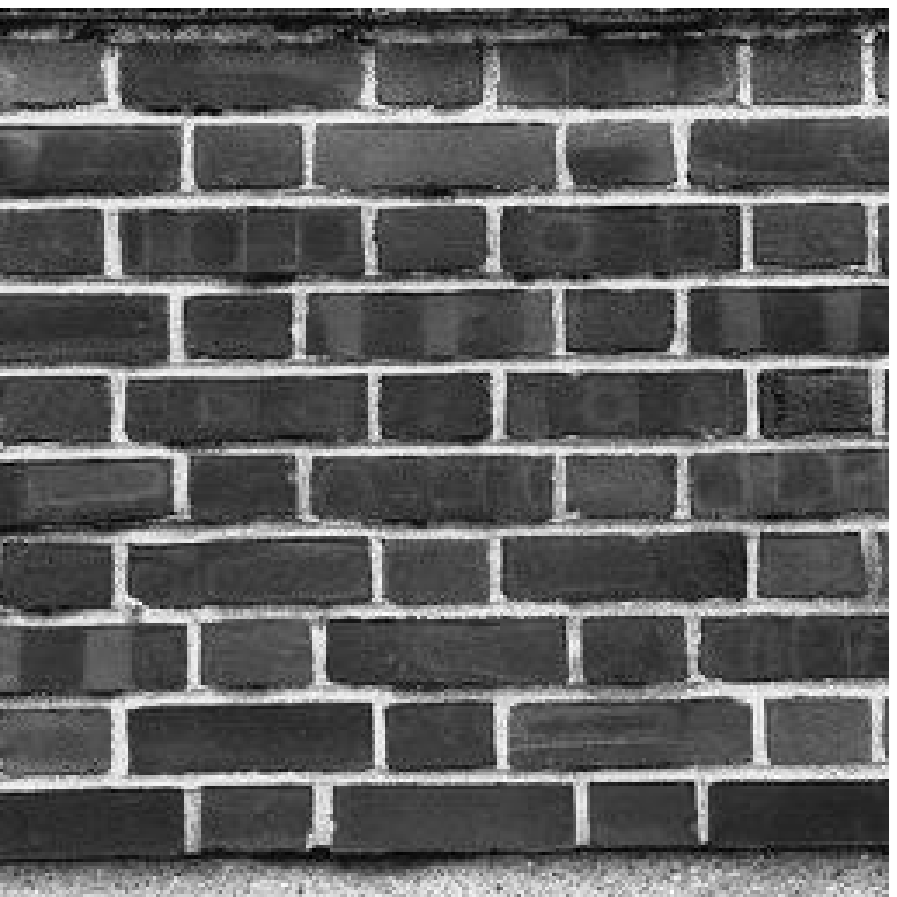}}\\
	\subfloat[]{\includegraphics[width=1cm]{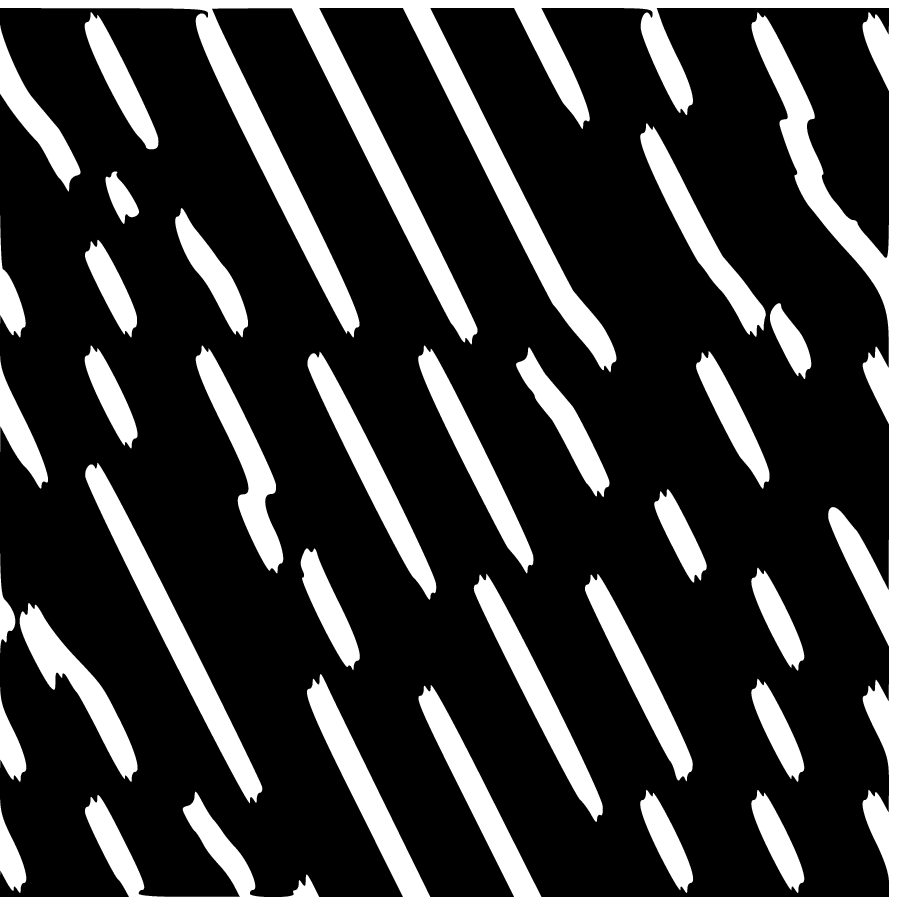}}\quad
	\subfloat[]{\includegraphics[width=1cm]{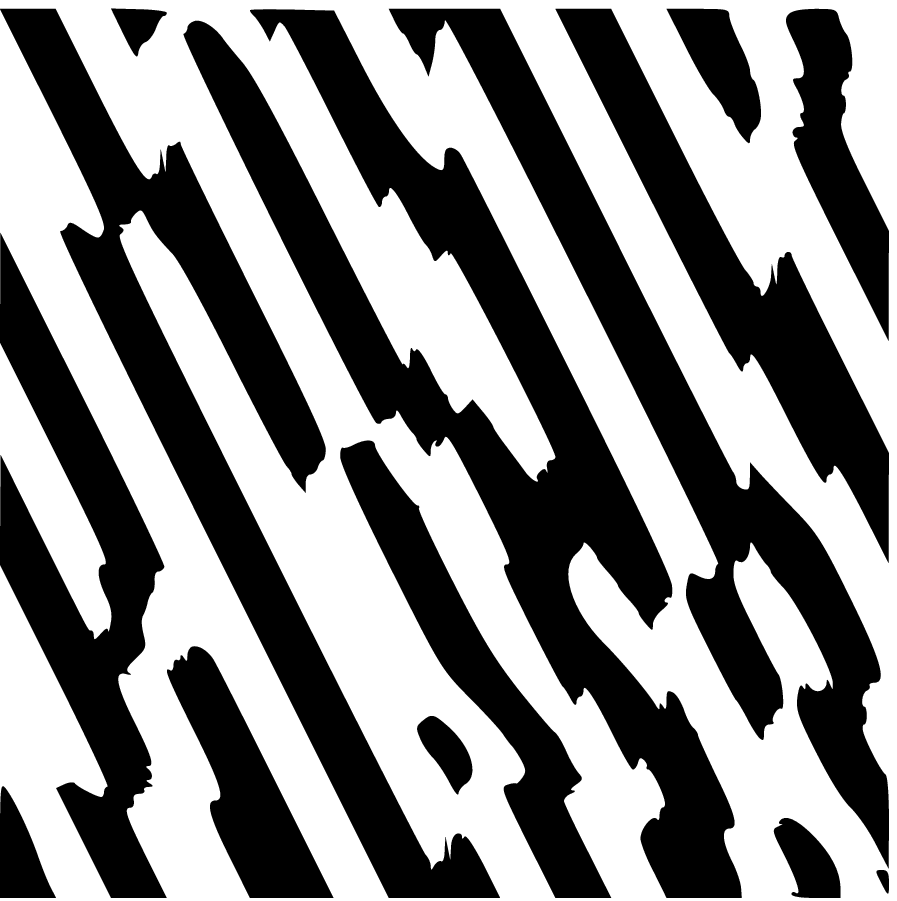}}\quad
	\subfloat[]{\includegraphics[width=1cm]{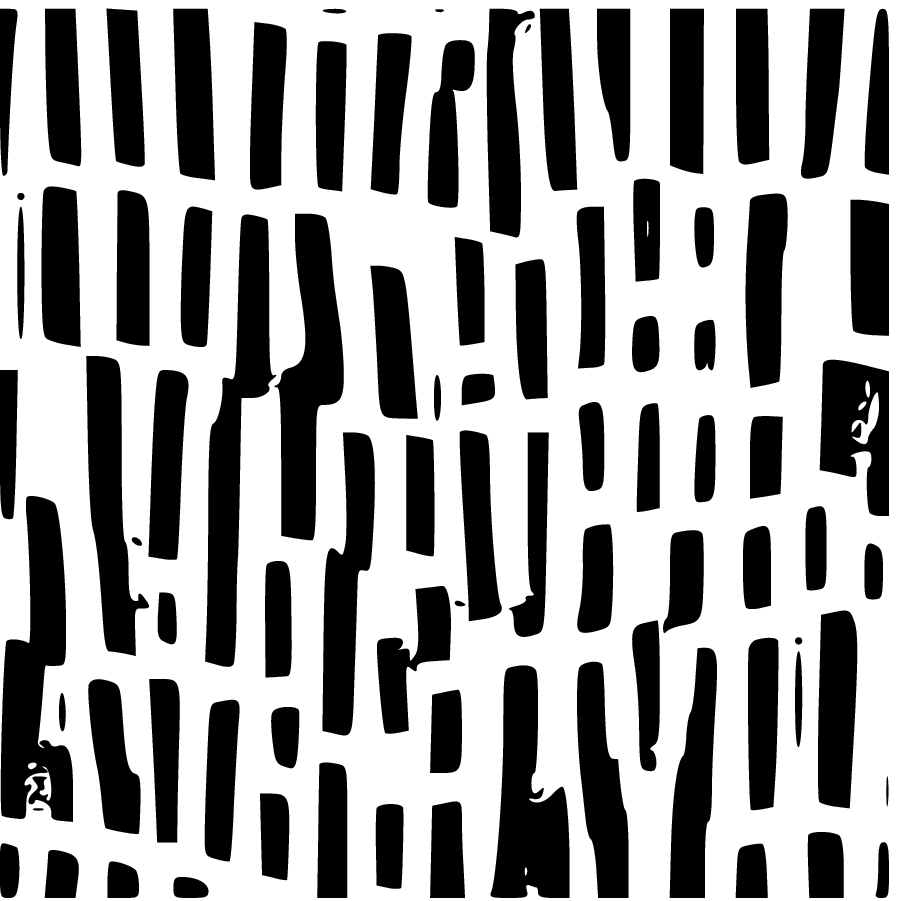}}\quad
	\subfloat[]{\includegraphics[width=1cm]{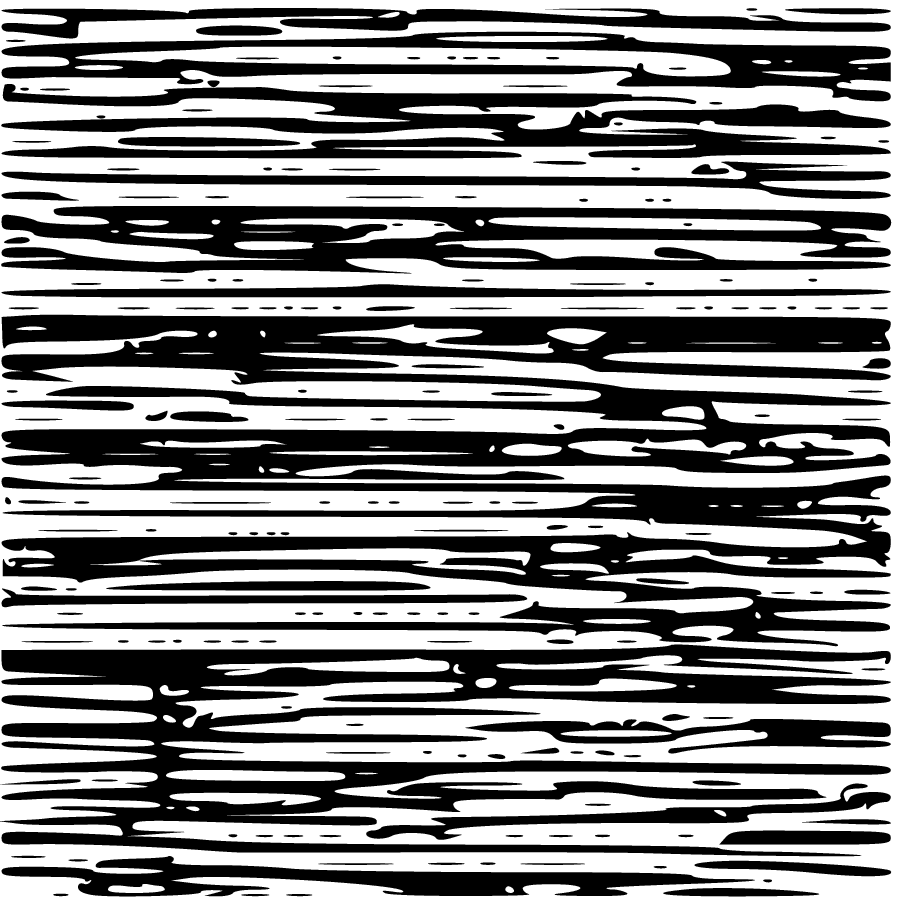}}\quad
	\subfloat[]{\includegraphics[width=1cm]{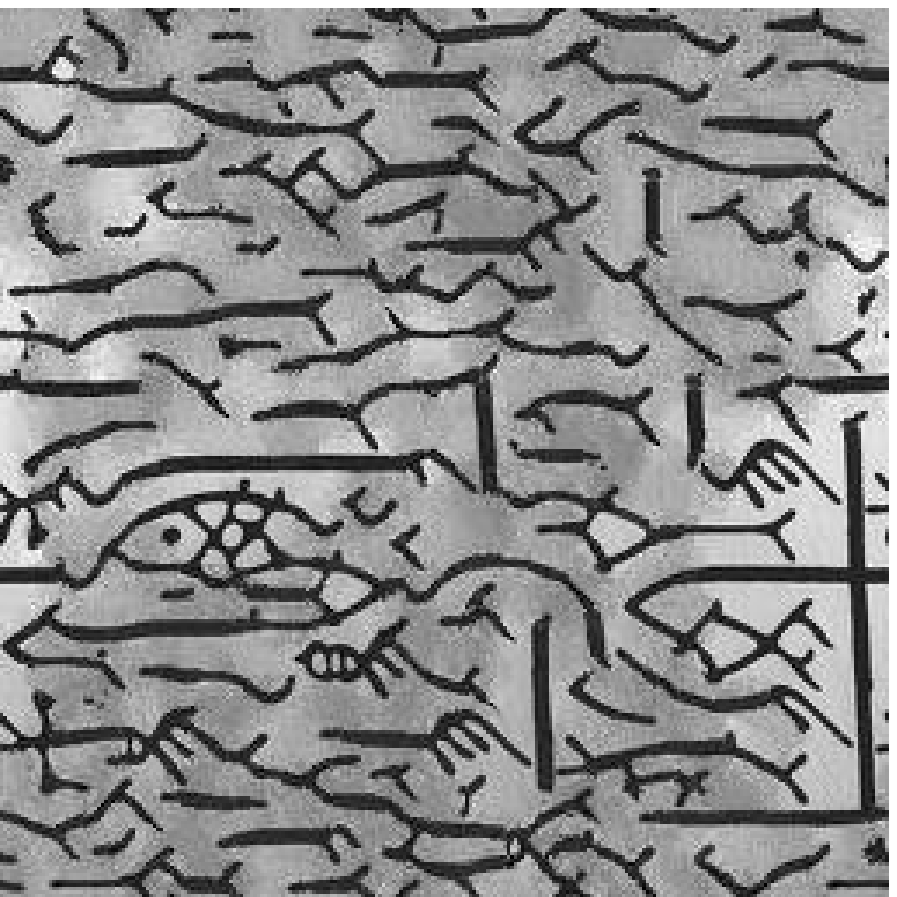}}\quad
	\subfloat[]{\includegraphics[width=1cm]{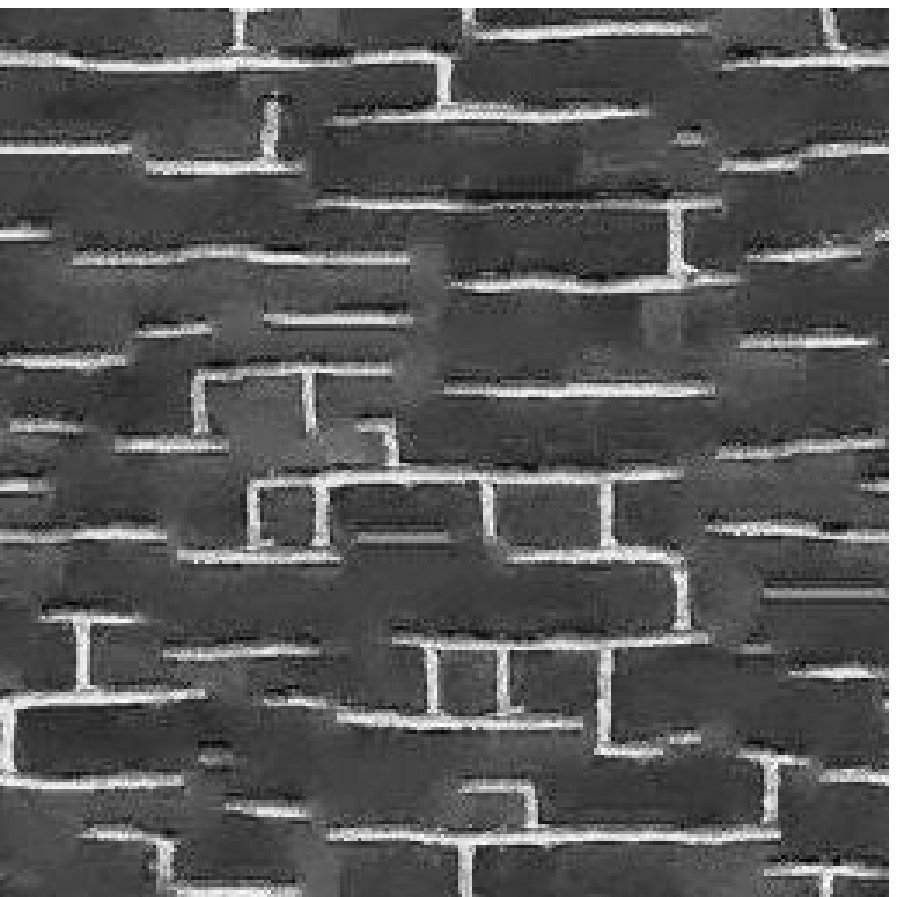}}		
	\caption{Examples of periodic textures (a) to (f) and their corresponding synthetic texture (g) to (l).}
	\label{fig:texture}
\end{figure}

\begin{table}[!t]
\caption{Numerical values of $\PEG$ for periodic textures and synthetic textures in \Fig{texture}.}
\label{table_1}
\centering
\begin{tabular}{|c|c|c|c|c|c|c|}
\hline
Periodic texture & (a) & (b) & (c) & (d) & (e) & (f) \\
\hline
Entropy value & 0.568&0.623&0.328&0.484&0.823&0.842\\
\hline
Synthetic texture & (g) & (h) & (i) & (j) & (k) & (l) \\
\hline
Entropy value &   0.7922& 0.829&0.817&0.852&0.865&0.875\\
\hline
\end{tabular}
\end{table}
Considering the directed graph depicted in Fig.~\ref{sub:directed-graph} and for $m=4$, we compute the $\PEG$. Results in Table~\ref{table_1} shown that the permutation entropy of a synthesized texture are higher than of its corresponding periodic texture. Hence, the algorithm discriminate synthetic periodic from periodic textures in agreement with \cite{Morel2021,Azami2017a}. 

\subsection{Permutation entropy for signals on graphs}
  
Here, the signal $\X=\{x_i\}_{i=1}^N$ will be white Gaussian noise. The entropy has different values depending on the graph underlying, even if $\X$ remains the same.

\subsubsection{Random signals on different graph structures}\label{sub1} We show that the algorithm is able to distinguish different values of regularity degree of the graph.
 
 We considerer several graph structures $\G$ on $N$ vertices with $\X$ as a graph signal: $G_1$ the cycle graph, $G_2$ the complete bipartite graph with bipartition $V_1$ and $V_2$ where $\card{V_1}=\card{V_2}=N/2$, and $G_3$ the complete graph. 
Consider $N=500$, we generate 20 realisations of signal $\X$ and we compute $\PEG$ for $G=G_1, G_2$ and $G_3$ for different embedding values $m$. In \Fig{regular}, we show the mean and standard deviation for dimensions $2\leq m\leq 8$. 
\begin{figure}[h]
	\centering
	\includegraphics[scale=.29]{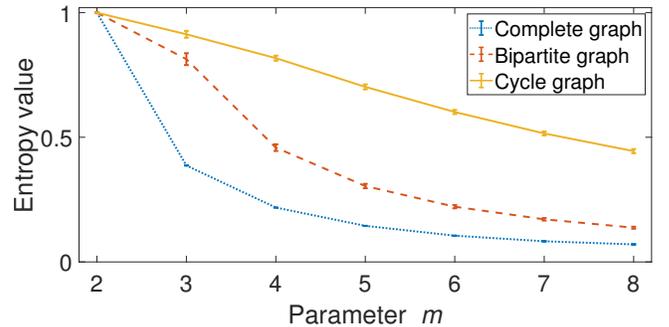}
	\caption{Permutation entropy measures of the random signal $\X$ on several underlying graphs $G$ on $500$ vertices.}
	\label{fig:regular}
\end{figure}

Observe that $G_1$, $G_2$ and $G_3$ are $2, N/2$ and $N-1$ regular graph, respectively. The analysis shows the random signal $\X$ on the cycle graph cycle $\G_1$ has larger entropy values than the same signal on the complete graph $\G_3$. In general, the entropy value for the signal decreases as the degree of regularity of the graph increases. Formally, denote by $G(k,N)$ a $k$-regular graph on $N$ vertices, then:
$\lim_{m\to N}\PE_{G(k,N)}(m) = 0$
and larger value of $k$ increases the convergence ratio. Hence $\PE_{G(k_1,N)}(m)<\PE_{G(k_2,N)}(m)$ for all $m$ and $k_2\ll k_1$. Then, for the same signal, the algorithm is able to detect different degree regularity on the graph structure.

\subsubsection{Erd\H{o}s–Rényi graphs}
The random graph model introduced by Erd\H{o}s–Rényi (ER graphs) is parametrised by the number of vertices $N$, the probability $p$, and it is denoted by $G_{N,p}$. Therefore, ER graphs are models with random connections and they are used to represent common real world data.  

For a fixed $N=2000$, we consider the ER graph $G_{N,p}$ for several values of $p=0.1, 0.3, 0.6, 0.9$. For $20$ realisations of the signal $\X$ we compute its $\PE_{G_{N,p}}$ for $2\leq m \leq 7$. In \Fig{random}, we show the mean and standard deviations. 
\begin{figure}[h]
	\centering
	\includegraphics[scale=.29]{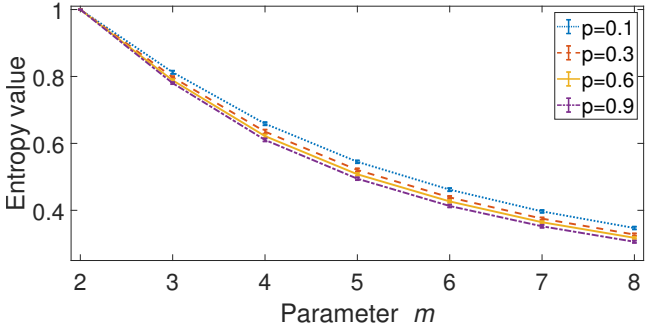}
	\caption{ Erd\H{o}s–Rényi model for values $p$ equal to $0.1, 0.3, 0.6$ and $0.9$ and $N=2000$. Mean value of its $\PEG$ and standard deviation for $20$ simulations.}
	\label{fig:random}
\end{figure}
In all the cases (except for $m=2$) there is not overlapping on the intervals. Then, for a fixed $m>2$ a smaller value of $p$ implies a smaller number of edges (less connectivity) and therefore larger entropy value. Then $\PE_{G_{N,p1}}<\PE_{G_{N,p2}}(m)$ for all $m>2$ and $p2\ll p1$, and the algorithm is able to detects different connectivity degree on the graph structure.

\subsubsection{Controlling the entropy value by changing the graph topology}
Any graph signal can be more regular/irregular depending on the topology graph. Let $\X$ be any signal with $N$ points and consider the embedding dimension $m=2$, for any $\alpha\in[0,1]$ we will are able to construct a graph $\G$ such its entropy of the signal $\X$ is equal (or close enough depending on $N$) to $\alpha$. 

Let $1\leq k\leq N-1$, and without loss of generality, suppose that $x_1,x_2,\dots,x_k$ are the $k$ largest values from the signal $\X$. Consider $G_{k}$ the complete bipartite graph with partition $A=\{1,2,\dots,k\}$ and $B=\{k+1,k+2,\dots,N\}$. In this case,  $PE_{G_k}=- \frac{N-k}{N} \ln ( \frac{N-k}{N})-\frac{k}{N} \ln ( \frac{k}{N})$.

In particular, if $k=1$ then $G_1$ is the star graph with centre on the vertex $1$. The entropy  $\PE_{G_1}=- \frac{N-1}{N} \ln ( \frac{N-1}{N})-\frac{1}{N} \ln ( \frac{1}{N})\rightarrow 0$ as $N\rightarrow\infty$. Then, we have constructed a graph structure with small entropy for the signal $\X$. Similarly, for $N$ even, we consider $k=N/2$, the entropy of the signal $\X$ on the graph $G_{\frac{N}{2}}$ is $-\ln ( \frac{1}{2})$ and its normalised entropy is equal to $1$.

In \Fig{randomalpha}, we show for $N=2000$ the entropy of a signal random signal $\X$ with underlying graph the bipartite graph $G_k$ for $1\leq k\leq 1000$. Observe that permutation entropy for the graph $G_k$ and $G_{N-k}$ are equal because for the graph symmetry. Then, for $m=2$ and any value $\alpha\in[0,1]$ we construct a graph $G_k$ with entropy equal to $\alpha$ (or close enough). Observe that this construction is optimised for $m=2$, and the range of the entropy for larger dimension is narrower. Similar constructions can be done to maximise or minimise the entropy for a fixed embedding dimension $m$.

\begin{figure}[h]
	\centering
	\includegraphics[scale=.29]{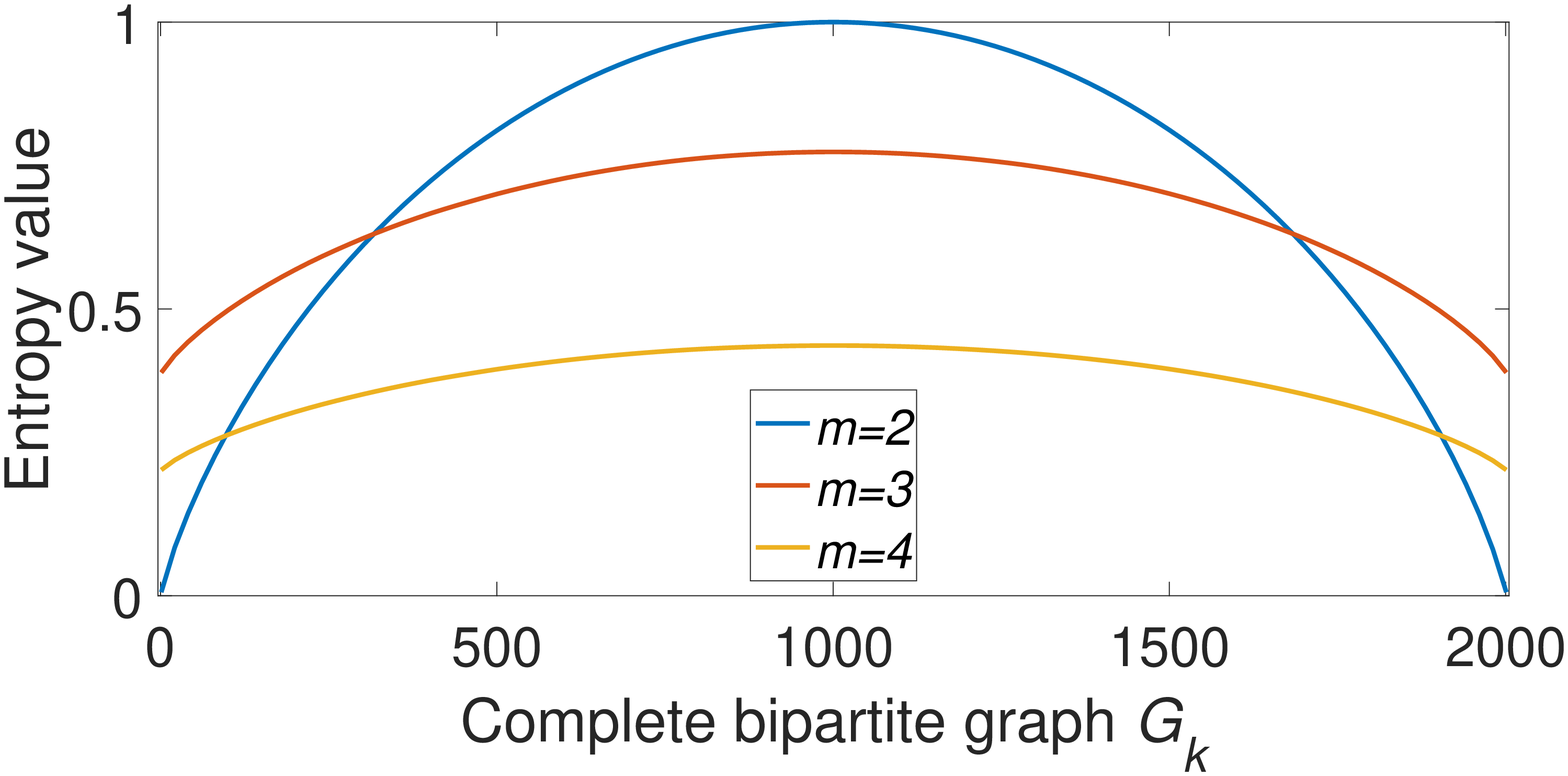}
	\caption{Permutation entropy values of the random signal $\X$ on the complete bipartite graph $G$ on $k$ and $2000-k$ vertices.}
	\label{fig:randomalpha}
\end{figure}

\subsection{A real-world data example: temperature data}
We use the temperature readings of ground stations observed in Brittany for January 2014~\cite{Signal2015}. The graph is defined as follows: each vertex represents the ground station, and the weighted edges between vertices are given by a Gaussian kernel of the Euclidean distance between vertices~\cite{Shuman2016}:

\[   
W_{ij} = 
\begin{cases}
	\exp\left(\frac{-d(i,j)^2}{2\sigma_1^2} \right)  &\quad\text{if } d(i,j)\leq \sigma_2\\
	\text{0} &\quad\text{otherwise.} \\ 
\end{cases}
\]  

Similarly to \cite{Signal2015}, we use $\sigma_1^2=5.1^8$ and $\sigma_2=10^5$. Let $\X_1$ be the signal corresponding to the temperature observation at 14:00 (January 27, 2014) shown in Figure~\ref{sub:mapA} and $\X_2$ the signal at 04:00 (January 23, 2014) shown in Fig.~\ref{sub:mapB}. The signal $\X_1$ is more \emph{irregular} than $\X_2$ in the following sense: In $\X_1$, the maximum/minimum values are more dispersed along the map. In contrast, the extreme values are more grouped in $\X_2$. In Figure \ref{sub:mapB}, we see that the lowest temperatures are localised in the North-East, the highest in the South-West and the transition between ground stations are smother. While in Fig.~\ref{sub:mapA}, the distribution of the temperatures is more random. This fact is captured by the patterns computed for the entropy. 

Formally, the irregularity is measured using $\PEG$ in both signals. For example, consider $m=4$, $L=1$ for compute the entropy, that requires up to $24$ permutation pattern. For each vertex, one pattern is generated (\Eq{eq:average}). In the signal $\X_1$, $13$ patterns appear with distribution $\{1,1,1,2,2,2,3,3,3,3,4,4,8\}$ (see Figure \ref{sub:mapA2}) and for the signal $\X_2$, $5$ patterns are formed with distribution $\{1,1,2,13,20\}$ (see Figure \ref{sub:mapB2}). The $\PEG$ of the signal $\X_1$ is $0.7529$, while for $\X_2$ is $0.3313$.

In general, the entropy values for the signal $\X_1$ are  $0.9995, 0.8729, 0.7529, 0.5438$ for $m=2,3,4,5$ and for $\X_2$ are  $0.9740, 0.5667, 0.3313, 0.2739$ for $m=2,3,4,5$. In all the cases, the entropy for $\X_2$ is smaller than $\X_1$.

In fact, the difference (and relations) of entropy values for 04:00 and 14:00 are preserved for all embedding dimensions $m$. We compute the entropy for all temperature measures (one per hour for 31 days). Table~\ref{table_2} shows the average of the entropy values for the temperature signals at 04:00 and at 14:00. The entropy value of the temperature measurement is higher at 14:00 than at 04:00. We speculate this reflects a more irregular distribution of temperatures over the ground at 14:00 (during the early afternoon when temperatures could reach higher values depending on the geographical situation of each station) than at 4:00 in the middle of the night.
\begin{table}[!h]

	\caption{Average of $\PEG$ values for the temperature measure at 04:00 and 14:00 .}
\label{table_2}
\centering

\begin{tabular}{|c|c|c|c|c|c|c|}
	\hline
	& $m=2$ & $m=3$ & $m=4$ & $m=5$  \\
	\hline
	Temperature at 04:00 & 0.969&0.830&0.572&0.421\\
	\hline
	Temperature at 14:00 & 0.973 & 0.849 & 0.618 & 0.443   \\
	\hline
\end{tabular}
\end{table}

\begin{figure}
	\centering
	\begin{subfigure}[t]{0.5\textwidth}

		\includegraphics[scale=.29]{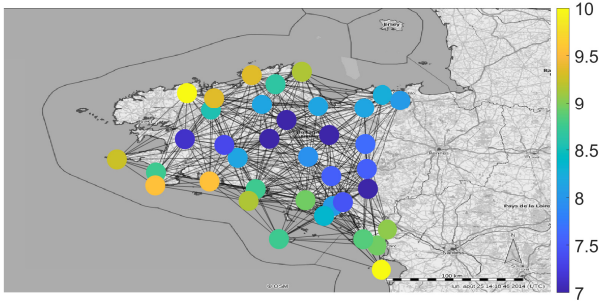}
		\caption{A temperature observation at 14:00 (January 27, 2014), denoted as signal $\X_1$.}
		\label{sub:mapA}
	\end{subfigure}%
\\
	\begin{subfigure}[t]{0.5\textwidth}
		\centering
		\includegraphics[scale=.29]{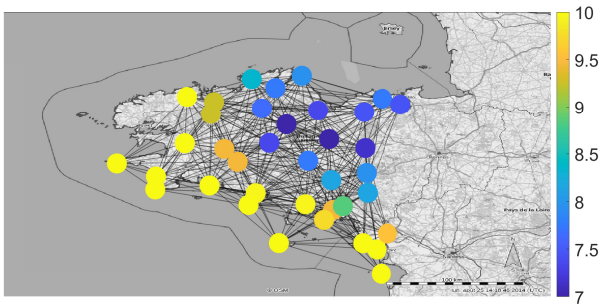}
		
		\caption{A temperature observation at 04:00 (January 23, 2014), denoted as signal $\X_2$.}
		\label{sub:mapB}	
	\end{subfigure}

	\caption{ Two different readings of temperatures in Brittany during January 2014.}
	\label{fig:mapas}
\end{figure}

\begin{figure}
	\centering
	\begin{subfigure}[t]{0.5\textwidth}
		\includegraphics[scale=.29]{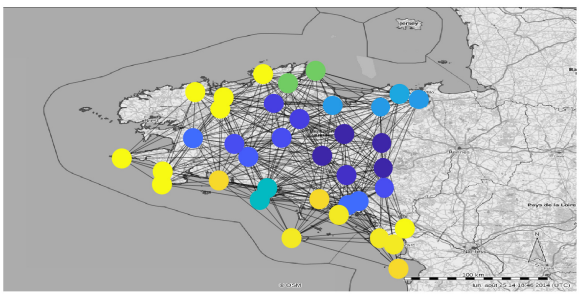}
		\caption{Patterns of the signal $\X_1$ for $m=4$.}
		\label{sub:mapA2}
	\end{subfigure}%
\\
	\begin{subfigure}[t]{0.5\textwidth}
		\centering
		\includegraphics[scale=.29]{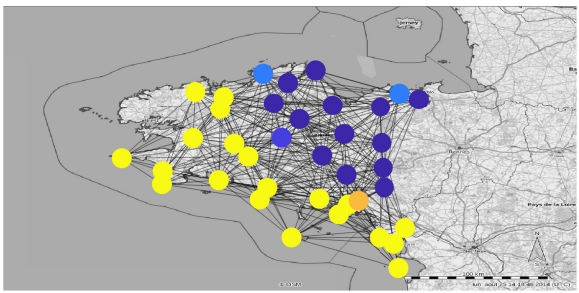}
		
		\caption{Patterns of the signal $\X_2$ for $m=4$.}
		\label{sub:mapB2}	
	\end{subfigure}

	\caption{ Each vertex correspond to one pattern of the signal. Equal colours are equal patterns.}
	\label{fig:mapas2}
\end{figure}	
In this example, we see how the performance of $\PEG$ is different from the smoothness signal definition in \Eq{eq:smooth}. While the smoothness of the $\X_1$ is $221.9$ and for $\X_2$ is $138.2$, i.e. is smoother $\X_1$ than $\X_2$, but visually it seems more \emph{regular} $\X_2$ than $\X_1$. Recall that we are interested in the change of the pattern and the smoothness in the change of values (\Prp{cX} and \Eq{eq:smooth}).

\section{Conclusions and future work}
\label{sec:conc}
In this paper, we generalised the permutation entropy for graph signals. Some modifications and extensions of the classical $\PE$ have been developed in the literature~\cite{Chen2019,Morel2021,azami2019two}. However, we introduce for the first time an entropy measure for signals on general irregular domains defined by graphs.
In particular, we observe that by considering the underlying graph $\G$ as a path (1D), the results of $\PEG$ coincide with the results for standard algorithms on time series (as the original $\PE$~\cite{Bandt2002}). Moreover, our graph algorithm also enables applying $\PE$-related analysis to images (2D).

We also observe that the results depend on how much information we have about the underlying graph. Weights or directions on the edges give different kinds of relations between the signals at neighbouring vertices captured by the algorithm.

We explore how the same signal changes its entropy depending on the topology of the graph and how the same underlying graph with signals with different dynamics has different $\PEG$. It demonstrates the importance of the signal and graph for computing the entropy values.

Some future lines of research are the following:
\begin{itemize}
	\item Extend other one-dimensional entropy metrics to irregular domains (e.g., dispersion entropy).
	\item Generate surrogate graph signals and test the nonlinearlity of the signals defined on the graph.
	\item Study the relationship between properties of the graph (for example, the spectrum of the graph Laplacian), and the regularity of the signal.  This would also be useful to help determine how to define the graph for a given graph signal that would be subject to entropy analysis.
\end{itemize}

We expect the algorithm presented in this paper to enable
the extension of similar techniques that inspect nonlinear dynamics from data acquired over irregular graphs.  

The MATLAB code used in this paper are
freely available at \url{https://github.com/JohnFabila/PEG}.

\ifCLASSOPTIONcaptionsoff
  \newpage
\fi


\begin{thebibliography}{1}

\bibitem{Bandt2002}
C.~Bandt, and B.~Pompe, ``Permutation Entropy: A Natural Complexity Measure for Time Series'', \emph{Physical Review Letters}, 2002, 88(17), pp. 174102.
\bibitem{Cao2004}
Y.~Cao, W.W.~Tung, J.B.~Gao, V.A.~Protopopescu, and L.M.~Hively, ``Detecting dynamical changes in time series using the permutation entropy'', \emph{Physical review E}, 2004, 70(4), p.046217.
\bibitem{Olofsen2008}
E.~Olofsen, J.W.~Sleigh, A.~Dahan, ``Permutation entropy of the electroencephalogram: a measure of anaesthetic drug effect'', \emph{BJA: British Journal of Anaesthesia}, 2008, 101(6), pp. 810–821.
\bibitem{Yan2012}
R.~Yan, Y.~Liu, and R.X.~Gao, ``Permutation entropy: A nonlinear statistical measure for status characterization of rotary machines'', \emph{Mechanical Systems and Signal Processing}, 2012, 29, pp.474-484.
\bibitem{Zunino2009}
L.~Zunino, M.~Zanin, B.M.~Tabak, D.G.~Pérez, O.A.~Rosso, ``Forbidden patterns, permutation entropy and stock market inefficiency'', \emph{Physica A: Statistical Mechanics and its Applications}, 2009, 388(14), pp.2854-2864.
\bibitem{Azami2016}
H.~Azami, and J.~Escudero, ``Improved multiscale permutation entropy for biomedical signal analysis: Interpretation and application to electroencephalogram recordings'', \emph{Biomedical Signal Processing and Control}, 2016, 23, pp.28-41.
\bibitem{Chen2019}
Z.~Chen, Y.~Li, H.~Liang, and J.~Yu, ``Improved permutation entropy for measuring complexity of time series under noisy condition'', \emph{Complexity}, 2019, pp.1-12.
\bibitem{Bian2012}
C.~Bian, C.~Qin, Q.D.~Ma, and Q.~Shen, ``Modified permutation-entropy analysis of heartbeat dynamics'', \emph{Physical Review E}, 2012, 85(2), p.021906.
\bibitem{Azami2018}
H.~Azami, and J.~Escudero, ``Amplitude- and fluctuation-based dispersion entropy'', \emph{Entropy}, 2018, 20(3), pp.1–21.
\bibitem{Rostaghi2016}
M.~Rostaghi, and H.~Azami, ``Dispersion entropy: A measure for time-series analysis'', \emph{IEEE Signal Processing Letters}, 2016, 23(5), pp. 610-614.
\bibitem{Martinez2018}
J.H.~Martínez, J.L.~Herrera-Diestra, and M.~Chavez, ``Detection of time reversibility in time series by ordinal patterns analysis'', \emph{Chaos: An Interdisciplinary Journal of Nonlinear Science}, 2018, 28(12), p.123111.
\bibitem{Zanin2018}
M.~Zanin, A.~Rodríguez-González, E.~Menasalvas Ruiz, and D.~Papo, ``Assessing time series reversibility through permutation patterns'', \emph{Entropy}, 2018, 20(9), p.665.

\bibitem{Morel2021}
C.~Morel, and A.~Humeau-Heurtier, ``Multiscale permutation entropy for two-dimensional patterns'', \emph{Pattern Recognition Letters}, 150, pp.139-146.
\bibitem{Silva2016}
L.E.V.~Silva, A.C.S.~Senra Filho, V.P.S.~Fazan, J.C.~Felipe, and L.M.~Junior, ``Two-dimensional sample entropy: Assessing image texture through irregularity'', \emph{Biomedical Physics and Engineering Express}, 2016, 2(4), p.~045002.
\bibitem{azami2019two}
H.~Azami, L.E.V.~da Silva, A.C.M.~Omoto, and A.~Humeau-Heurtier, ``Two-dimensional dispersion entropy: An information-theoretic method for irregularity analysis of images'', \emph{Signal Processing: Image Communication}, 2019, 75, pp.~178-187.
\bibitem{Azami2017a} 
H.~Azami, J.~Escudero, and A.~Humeau-Heurtier, ``Bidimensional distribution entropy to analyze the irregularity of small-sized textures'', \emph{IEEE Signal Processing Letters}, 2017, 24(9), pp.1338-1342.
\bibitem{Ortega2018}
A.~Ortega, P.~Frossard, J.~Kovačević, J.M.~Moura, and P.~Vandergheynst, ``Graph signal processing: Overview, challenges, and applications'', \emph{Proceedings of the IEEE}, 2018, 106(5), pp. 808-828.
\bibitem{Stankovic2019}
L.~Stankovic, D.~Mandic, M.~Dakovic, M.~Brajovic, B.~Scalzo, and T.~Constantinides, ``Graph Signal Processing--Part I: Graphs, Graph Spectra, and Spectral Clustering'', 2019, \emph{arXiv preprint} arXiv:1907.03467.
\bibitem{Shuman2013}
D.I.~Shuman, S.K.~Narang, P.~Frossard, A.~Ortega, and P.~Vandergheynst, ``The emerging field of signal processing on graphs: Extending high-dimensional data analysis to networks and other irregular domains'', \emph{IEEE signal processing magazine}, 2013, 30(3), pp.83-98.

\bibitem{Shuman2016}
D.I.~Shuman, B.~Ricaud, and P.~Vandergheynst, ``Vertex-frequency analysis on graphs'', \emph{Applied and Computational Harmonic Analysis}, 2014, 40(2), pp.260-291.

\bibitem{Pirondi2016}
E.~Pirondini, A.~Vybornova, M.~Coscia, and D.~Van De Ville, D., ``A spectral method for generating surrogate graph signals'', \emph{IEEE signal processing letters}, 2016, 23(9), pp.1275-1278.

\bibitem{Han2012}
L.~Han, F.~Escolano, E.R.~Hancock, R.C.~Wilson, ``Graph characterizations from von Neumann entropy'', \emph{Pattern Recognition Letters}, 2012, 33(15), pp.1958-1967.
\bibitem{Passerini2011}
F.~Passerini, S.~Severini, ``The von Neumann entropy of networks'', \emph{arXiv preprint} arXiv:0812.2597.
\bibitem{Cuesta2018}
D.~Cuesta–Frau, M.~Varela–Entrecanales, A.~Molina–Picó, B.~Vargas, ``Patterns with Equal Values in Permutation Entropy: Do They Really Matter for Biosignal Classification?'', \emph{Complexity}, 2018, (2018).
\bibitem{Fabila-Carrasco2020}
J.S.~Fabila-Carrasco, F.~Lledó, and O.~Post, ``Spectral preorder and perturbations of discrete weighted graphs'', \emph{Mathematische Annalen}, 2020 pp.1-49.
\bibitem{Zhang2010}
C.~Zhang, ``Period three begins'', \emph{Mathematics Magazine}, 2010, 83(4), pp.295-297.
\bibitem{web1}
\url{https://graphics.stanford.edu/projects/texture/demo/synthesis\_eero.html}
\bibitem{Signal2015}
B.~Girault, ``Stationary graph signals using an isometric graph translation''. In \emph{23rd European Signal Processing Conference (EUSIPCO)}, 2015, pp. 1516-1520.



\bibitem{Fadlallah2013}
B.~Fadlallah, B.~Chen, A.~Keil, and J.~Principe, ``Weighted-permutation entropy: A complexity measure for time series incorporating amplitude information'', \emph{Physical Review E}, 2013, 87(2), p.022911.


\end{thebibliography}
\end{document}